\theoremstyle{plain}
\numberwithin{theorem}{section}
\numberwithin{lemma}{section}
\newtheorem{proposition}{Proposition}
\numberwithin{proposition}{section}
\theoremstyle{definition}
\newtheorem{remark}{Remark}
\numberwithin{remark}{section}
\journal{Journal of Computational and Applied Mathematics}
\let\cite=\citet
\newcommand{\commentB}[1]{\textcolor{black}{#1}}
\newcommand{\commentA}[1]{\textcolor{black}{#1}}
\begin{document}

\begin{frontmatter}

\title{A high-order artificial compressibility method based on Taylor series time-stepping for variable density flow }

\author[UUIT]{Lukas Lundgren\corref{cor1}}
\ead{lukas.lundgren@it.uu.se}

\author[UUIT]{Murtazo Nazarov}
\ead{murtazo.nazarov@it.uu.se}

\cortext[cor1]{Corresponding author}

\address[UUIT]{Uppsala University, Department of Information Technology, P O Box 337, S-751 05 Uppsala, Sweden}



\begin{abstract}
In this paper, we introduce a fourth-order accurate finite element method for incompressible variable density flow. The method is implicit in time and constructed with the Taylor series technique, and uses standard high-order Lagrange basis functions in space. Taylor series time-stepping relies on time derivative correction terms to achieve high-order accuracy. We provide detailed algorithms to approximate the time derivatives of the variable density Navier-Stokes equations. Numerical validations confirm a fourth-order accuracy for smooth problems. We also numerically illustrate that the Taylor series method is unsuitable for problems where regularity is lost by solving the 2D Rayleigh-Taylor instability problem. 
\end{abstract}

\begin{keyword} 
  incompressible variable density flow, stabilized finite element method, 
  artificial viscosity, artificial compressibility, Taylor series method
\end{keyword}

\end{frontmatter}


\markboth{AUTHORS}{Lukas Lundgren and Murtazo Nazarov}


\section{Introduction} 
The simulation of incompressible variable density flow plays an important role in several areas of fluid dynamics. Its importance stems from its usefulness when simulating flow largely affected by density variations. This situation occurs in many places in nature, such as stratified flow in the ocean and the mixing of fluids with distinct phases, e.g. oil and water. The governing equations that we consider in this manuscript are the incompressible Navier-Stokes equations, augmented with an advection equation for the density. 


In the finite element literature concerning incompressible flow, the divergence-free constraint is often imposed using a projection method. For an overview of projection methods, we refer to the review paper by \cite{Guermond_Minev_2006} and the references therein. There are two main disadvantages of using a projection method: the order of accuracy (in time) is limited to second-order and each time-step involves solving a linear system for the pressure which scales with $\mathcal{O}(h^{-2})$, where $h$ is the grid-spacing and may become a bottleneck for small mesh sizes.

Another approach, to impose the divergence-free constraint, is to use an artificial compressibility method. Many variations of these types of methods have been proposed over the years, the first of which was proposed by \cite{Chorin1967} and \cite{Temam_1968} in the 1960s and has been further developed by \cite{Shen_1996} and others. Recent examples of artificial compressibility methods in many different forms include \citep{DeCaria_2017, Yang_2016, Guermond_Minev_2015, Guermond_Minev_2017, Guermond_Minev_2019, Layton_2020, Chen_Layton_2019, DeCaria_2019, DeCaria_2020,Cox_2016,milani_thesis}. In particular, \cite{Guermond_Minev_2015} have recently proposed an artificial compressibility method that uses a defect-correction time-stepping scheme to achieve third-order accuracy and were able to prove unconditional stability (for the unsteady Stokes-equations).  The method involves solving a linear system which scales with $\mathcal{O}(\tau h^{-2})$, where $\tau$ is the time-step, which is an improvement over $\mathcal{O}(h^{-2})$. The authors have further developed the method to decrease computational complexity \citep{Guermond_Minev_2017} and added a robust time-step control \citep{Guermond_Minev_2019}. Currently, the method proposed by \cite{Guermond_Minev_2019} is limited to constant density, and the extension to variable density is still lacking in the literature. The finite element methods proposed for variable density incompressible flow available in the literature are still mostly based on projection methods and are, as such, limited to second-order accuracy, see \citep{Guermond_Salgado_2009,Guermond_Salgado_2011,
Wu2017,Pyo_2007,Chen_2020} and the references therein. Another approach to achieve high-order accuracy in time is to solve the classical saddle-point system, which is roughly as fast as a projection method is \citep{Axelsson_2015}.

The main aim of this paper is to introduce a new fourth-order accurate finite element method for variable density flow. To achieve this, we have utilized the artificial compressibility method proposed by \cite{Guermond_Minev_2019}, and further developed their method to fit in the variable density context. We emphasize that this extension is not straightforward and there are many different approaches to tackle this problem, see the dissertation by \cite{Fahad_dissertation} and recent works by \cite{Bassi_2018} and \cite{Manzanero_2020} in the context of discontinuous Galerkin \commentA{methods}. More specifically, \cite{Bassi_2018} and \cite{Manzanero_2020} presented a high-order method where the time-stepping was performed explicitly which could lead to stiffness problems if the artificial compressibility penalty parameter is chosen too greedily.

This is in contrast to \cite{Guermond_Minev_2019}, who used the Taylor series method as an implicit time-stepping method. The time-stepping method relies on approximating time derivatives which are then used as correction terms to achieve high-order accuracy. Despite its good stability properties and high accuracy for smooth problems, it is currently not known if the method is suitable for nonsmooth problems where the regularity of the solution is lost. One of the goals of this article is to investigate this question by solving the so-called Rayleigh-Taylor instability problem. In this test case, the density has a discontinuity, and therefore the time derivative of the density becomes unbounded. This loss of regularity makes the Taylor series method unsuitable for problems with strong discontinuities.

This paper is organized as follows: In Section \ref{Sec:prelim} the governing equations are introduced and a recap of the first-order method by \cite{Guermond_Minev_2019} is given. In Section \ref{Sec:fem_approx} the finite-element discretization for the first-order method is presented. In Section \ref{Sec:time-stepping-method} the time-stepping method is derived which is the main contribution of this manuscript. In Section \ref{Sec:boundary_conditions} we give a brief discussion of the boundary conditions. In Section \ref{Sec:computations} we test our method against common benchmarks in the literature. In Section \ref{Sec:conclusion} we give concluding remarks.

\section{Preliminaries}\label{Sec:prelim}
In this section, we introduce the governing equations that model variable-density flow. We also give a brief overview of the artificial compressibility method to impose the divergence-free constraint. 

\subsection{Governing equations}\label{Sec:Equation}
We consider the incompressible Navier-Stokes equation with variable density in a domain $\Omega \subset \mb R^d$ and finite time interval $[0,T]$
\begin{equation}\label{eq:cons_law_primitive}
      \begin{aligned}
        \p_t \rho + \bu \cdot \nabla \rho  & = 0,\\
\rho (\p_t \bu + \bu \cdot \nabla  \bu ) + \nabla p - \mu \Delta \bu &= \bef, \quad &(\bx,t) \in  \Omega \times (0,T],\\
           \nabla \cdot \bu &= 0,\\
           \bu(\bx,0) &= \bu^0(\bx), \\
           \rho(\bx,0) &= \rho^0(\bx), \quad  &\bx\in \Omega,    
      \end{aligned}
\end{equation}
where the density $\rho > 0$, the velocity field $\bu$ and the pressure $p$ are the unknowns. $\bef(\bx,t)$ represents an external force, $\mu > 0$ is the dynamic viscosity and $\rho^0(\bx)$, $\bu^0(\bx)$ are initial conditions for the density and velocity. We assume that the governing equations are supplied with well-posed boundary conditions.

\subsection{Artificial compressibility} \label{Sec:artificial_compressibility}
In this section, we discuss the technique which we use to impose the divergence-free condition. To make the argument more general and in line with \citep{Guermond_Minev_2019}, we omit the spatial discretization for now. Later we provide the spatial discretization using continuous finite elements in Section \ref{Sec:fem_approx}. As a consequence, the scheme presented in this section applies to other spatial discretizations such as finite volume, finite differences, discontinuous Galerkin and so on. The main idea with artificial compressibility is to regularize the incompressibility constraint using a penalty parameter $\epsilon>0$ such that
\begin{equation} \label{eq:ac_constraint}
\epsilon p_t + \nabla \cdot \bu = 0,
\end{equation}
to impose the divergence-free condition weakly. One of the biggest benefits of using artificial compressibility to impose the divergence-free constraint, as opposed to using a projection method, is that it allows for high-order accuracy \citep{Guermond_Minev_2015}. We begin by recapping the first-order artificial compressibility method proposed in \citep{Guermond_Minev_2015, Guermond_Minev_2019} for constant density. Let $\tau = t^{n+1} - t^n $ be the current time-step and $( \bu^n, p^n)$ be solutions obtained at the current time $t^n$, then the procedure is as follows: given $\bu^n$ and $p^{n}$, solve for $\bu^{n+1}$ and $p^{n+1}$ as follows:
\begin{align}  \label{eq:ac:const_dens}
 \frac{\bu^{n+1} - \bu^n}{\tau} + \bu^{n} \cdot \nabla \bu^{n}   -  \lambda \nabla \nabla \cdot \bu^{n+1} + \nabla p^n
&=  \bef^{n+1} + \mu \Delta \bu^{n+1}, \\ 
p^{n+1} = p^n - \lambda \nabla \cdot \bu^{n+1}, \label{eq:ac:const_dens2}
\end{align}
where $\lambda := \tau/\epsilon$ is a penalty parameter. Here, another benefit becomes apparent, since the linear system constructed by \eqref{eq:ac:const_dens} scales with $\mathcal{O}(\tau h^{-2})$. This is an improvement over projection methods which requires solving a linear system for the pressure which scales with $\mathcal{O}(h^{-2})$, typically referred to as a pressure Poisson equation. This may become a bottleneck for small mesh sizes.

The main goal of this manuscript is to extend the method proposed in \citep{Guermond_Minev_2019} to variable density flows. We propose the following extension of \eqref{eq:ac:const_dens}-\eqref{eq:ac:const_dens2} to variable density flow: given $\rho^n$, $\bu^n$ and $p^{n}$, solve for $\rho^{n+1}$ $\bu^{n+1}$ and $p^{n+1}$ as follows:
\begin{align}  \label{eq:first_order1}
\frac{\rho^{n+1} - \rho^n}{\Delta t} + \bu^{n} \cdot \nabla \rho^{n+1} &= 0, \\ \label{eq:first_order2}
\rho^{n+1} \left( \frac{\bu^{n+1} - \bu^n}{\Delta t} + \bu^{n} \cdot \nabla \bu^{n} \right)  -  \lambda \nabla \nabla \cdot \bu^{n+1} + \nabla p^n 
&=  \bef^{n+1} + \mu \Delta \bu^{n+1}, \\ \label{eq:first_order3}
p^{n+1} = p^n - \lambda \nabla \cdot \bu^{n+1}.
\end{align}
The high-order accurate extension of \eqref{eq:first_order1}-\eqref{eq:first_order3} is derived in Section \ref{Sec:time-stepping-method} using the Taylor series method following the procedure outlined in \citep{Guermond_Minev_2019}.


\begin{remark}
To make the extension to variable density to be more in line with \commentA{\cite{Guermond_Minev_2019}}, we chose the advective form of the flux in \eqref{eq:first_order1} and \eqref{eq:first_order2}. To the best of the authors' knowledge, some modifications of the scheme are necessary to achieve an $L_2$-estimate. In Appendix \ref{Sec:appendix}, we present an $L_2$-estimate for the modified scheme. One requirement is that the flux is discretized implicitly or semi-implicitly and also that it is discretized such that energy conservation is possible. For an excellent comparison of different alternatives to discretize the flux, we refer to \cite{Charnyi2017}. Lastly, the time level of the density in the time derivative term needs to be shifted. The shifted density will, unfortunately, make high-order extensions (like those presented in this work) harder since it will limit the order of accuracy in time to first-order accuracy. There have been some attempts at overcoming this issue for BDF2 based time-stepping by \cite{Guermond_Salgado_2011}, but it remains an open question.

\end{remark}

\begin{remark}
Note that the term $-  \lambda \nabla \nabla \cdot \bu^{n+1} + \nabla p^n$ is, at this stage, equivalent to $\nabla p^{n+1}$. After spatial discretization the terms could be different.
\end{remark}

\section{Finite element approximations} \label{Sec:fem_approx} 
In this section we provide the spatial discretization of the first-order algorithm \eqref{eq:first_order1}-\eqref{eq:first_order3}. This is done using continuous finite elements. We denote the computational mesh by $\mathcal{T}_h$ which is a triangulation of $\Omega$ into a finite number of disjoint elements $K$. The finite element spaces we use for the density, velocity and pressure are respectively given by
\begin{equation}
\begin{split}
&M_h : = \{ w_h : w_h \in \mathcal{C}^0( \Omega ); \forall K \in \mathcal{T}_h, w_h \in \polP_k \}, \\ 
&\bV_h := [M_h]^d, \\
&Q_h : = \l \{ q_h : q_h \in \mathcal{C}^0( \Omega ); \forall K \in \mathcal{T}_h, q_h \in \polP_{k^*}, \int_\Omega q_h d \bx = 0 \r \}, 
\end{split}
\end{equation}
where $\polP_k$ and $\polP_{k^*}$ are the set of multivariate polynomials of total degree at most $k \geq 1$ and $k^* \geq 1$ defined over $K$. It is well-known that to satisfy the so-called inf-sup condition we require $k > k^*$. We often use the inner product
\begin{equation}
(v, w) := \sum_{K \in \mathcal{T}_h} \int_K v \cdot w \ d \bx.
\end{equation}

The finite element method discretization of \eqref{eq:first_order1}-\eqref{eq:first_order3} proceeds as follows: Given $(\rho^n_h, \bu^n_h, p_h^n) \in M_h \times V_h \times Q_h$ solve for $\rho^{n+1}_h \in M_h$ such that

\begin{equation} \label{eq:fem_cont_update}
\begin{split}
\frac{1}{\tau} \l( \rho^{n+1}_h, w_h \r) + \l( \bu^{n}_h \cdot \nabla \rho^{n+1}_h, w_h \r) + \sigma_h \l( \nabla \rho_h^{n+1}, \nabla w_h \r)  = \frac{1}{\tau} \l( \rho^{n}_h, w_h \r) , 
\end{split}\quad \forall w_h \in M_h,
\end{equation}
then solve for $\bu_h^{n+1} \in \bV_h$ such that 
\begin{equation} \label{eq:fem_mom_update}
\begin{split}
\frac{1}{\tau} \l( \rho^{n+1}_h \bu^{n+1}_h , \bv_h \r) + \lambda \l( \nabla \cdot \bu^{n+1}_h , \nabla \cdot \bv_h \r)      + \mu (\nabla \bu^{n+1}_h, \nabla \bv_h) \\ + \nu_h ( \rho_h^{n+1} \nabla \bu^{n+1}_h, \nabla \bv_h) =  \frac{1}{\tau} \l( \rho^{n+1}_h \bu^{n}_h , \bv_h \r) + ( \bef^{n+1}  - \rho_h^{n+1} \bu^{n}_h \cdot \nabla \bu^{n}_h  -  \nabla p^n_h, \bv_h), 
\end{split} \quad \forall \bv_h \in \bV_h,
\end{equation} 
where $\nu_h$ and $\sigma_h$ are artificial viscosity coefficients yet to be defined. Lastly solve for $p^{n+1}_h \in Q_h$ such that
\begin{equation} \label{eq:fem_press_update}
(p^{n+1}_h, q_h) = (p^n_h, q_h) - \lambda (\nabla \cdot \bu^{n+1}_h, q_h), \quad \forall q_h \in Q_h.
\end{equation}
Note in particular the added stabilization terms $\sigma_h \l( \nabla \rho_h^{n+1}, \nabla w_h \r)$ in the continuity update \eqref{eq:fem_cont_update} and $\nu_h ( \rho_h^{n+1} \nabla \bu^{n+1}_h, \nabla \bv_h)$ in the velocity update. These contributions are the added artificial viscosities which will help stabilize the finite element discretization.

Another important term in the finite element discretization is $\lambda \l( \nabla \cdot \bu^{n+1}_h , \nabla \cdot \bv_h \r)$. This term is commonly referred to as grad-div stabilization in the literature. Considerable attention has been spent on analyzing this term as an addition to finite element approximations of the Stokes equations \citep{Jenkins_2014,Olshanskii_2004}. Added grad-div stabilization to finite element approximations of the Navier-Stokes equations (with constant density) has also been investigated \citep{Olshanskii2009,Case_2011,John2010, Rohe2010, Frutos2018}. In particular, we mention the work by \cite{Case_2011} who presented convincing theoretical and numerical arguments for the case that increasing $\lambda$ will make sure that the divergence-free condition is satisfied more strongly.

The integration necessary for the finite element approximations can be computed exactly by using an appropriate quadrature rule. In practice, this can be done using finite element software and in this work, we use FEniCS \citep{fenics2015} for all computations. In this work, the integrals are therefore computed exactly.

\begin{remark}
It is well-known that for advection-dominated problems, mass-lumping introduces unfavorable dispersion errors \citep{Guermond_Pasquetti_2013}. However, as mentioned by \cite[Sec 4.3]{Guermond_Minev_2019}, there is no need to use the full mass matrix in the pressure update \eqref{eq:fem_press_update}. In \eqref{eq:fem_press_update}, mass-lumping won't decrease the accuracy significantly or change the properties of the scheme.
\end{remark}

\begin{remark} 
The grad-div operator $\lambda \l( \nabla \cdot \bu^{n+1}_h , \nabla \cdot \bv_h \r)$ couples the different components of the velocities which decreases sparsity of the linear system and makes preconditioning techniques more difficult. Moreover, as $\lambda$ increases, the condition number of the linear system
increases significantly, which makes the system increasingly more difficult to solve using an iterative method. One alternative to handle this is to use an approximation of the grad-div term \citep{Guermond_Minev_2017,Guermond_Minev_2019,Bowers2014,Linke_2013,Minev2018}. Another approach would be to use $\nabla p^{n+1}$ instead of $-  \lambda \nabla \nabla \cdot \bu^{n+1} + \nabla p^n$ in \eqref{eq:first_order2} and base the finite element approximation on that instead. This will lead to a saddle-point structure of the resulting linear systems, making it suitable for preconditioning using a Schur complement approach \citep{Kronbichler_2018,Axelsson_2015}. Since the focus of this study is spatial and temporal discretization, we have not pursued this further.
\end{remark}

\begin{remark}
The boundary contribution from the Laplacian term in \eqref{eq:fem_mom_update} has been omitted since the boundary conditions we consider in this manuscript make it so, see Section \ref{Sec:boundary_conditions} for more details.
\end{remark}

\section{High-order time-stepping}\label{Sec:time-stepping-method}
The goal of this section is to extend the high-order time-stepping scheme proposed in \citep{Guermond_Minev_2019} to variable density flow and fourth-order accuracy. This is also the main aim of this manuscript. The time-stepping method is the so-called Taylor series method which is a recursive algorithm that solves for the time derivatives of the solution in \textit{descending} order. From now on, let the subscript of a function denote the partial derivatives with respect to $t$, i.e.
\begin{equation}
u_l := \partial_t^l u.
\end{equation}
Additionally, we denote the superscript to be the time level. As an example, we have $u_3^n := \partial_{ttt} u(t^n)$. Also recall that $\tau = t^{n+1} - t^n $ is the current time-step.

To give a brief explanation of the Taylor series method, consider the nonlinear ODE system $u_t = f(u,t)$, where $f$ is assumed to be a sufficiently smooth function. The $p$-th order accurate Taylor series method then proceeds as follows: Given $u^n_{p-1}, u^n_{p-2}, \dots, u_{1}^{n}, u_{0}^{n}$, solve for $u^{n+1}_{p-1}, u^{n+1}_{p-2}, \dots, u_{1}^{n+1}, u_{0}^{n+1}$ as follows:
\begin{itemize}
\item Step 1: Solve for $u_{p-1}^{n+1}  + \mathcal{O}(\tau)$
\item Step 2: Solve for $u_{p-2}^{n+1}  + \mathcal{O}(\tau^2)$
\item[] $\vdots$
\item   Step $p-1$: Solve for $u_{1}^{n+1} + \mathcal{O}(\tau^{p-1})$
\item Step $p$: Solve for $u_0^{n+1} + \mathcal{O}(\tau^p)$
\end{itemize}

The accuracy of each steps 2 to $p$ is achieved by using correction terms based on Taylor series and the time derivatives from the previously computed steps. There is a lot of flexibility when using this technique since the above steps can be done fully explicitly, fully implicitly or using an implicit-explicit approach. The latter was done in \citep{Guermond_Minev_2019} to construct a time-stepping scheme for the constant density incompressible Navier-Stokes equations. Since the Taylor series method is a one-step method it is suitable for time-step control which was also demonstrated in \citep{Guermond_Minev_2019}. One major downside with the Taylor series method is that it requires sufficient smoothness of the solution since the method relies on that the existence of time derivatives. In Section \ref{Sec:rt_ins} we test the Taylor series time-stepping method against a problem where the regularity of the solution is lost.

\subsection{Third-order accurate time-stepping for constant density}
The high-order time-stepping by \cite{Guermond_Minev_2019} is built upon the first-order time-stepping scheme \eqref{eq:ac:const_dens}-\eqref{eq:ac:const_dens2}. To simplify the notation we define $\bN( \bu ) := \bu \cdot \nabla \bu$ as the nonlinear operator of the Navier-Stokes equations. Also let $\nu$ be the physical kinematic viscosity. The third order Taylor series method \citep{Guermond_Minev_2019} proceeds as follows: Given $\bu_0^n, \bu_1^n, \bu_2^n$ and $p_0^n, p_1^n, p_2^n$, compute $\bu_0^{n+1}, \bu_1^{n+1}, \bu_2^{n+1}$ and $p_0^{n+1}$, $p_1^{n+1}$, $p_2^{n+1}$:
\begin{align}
\begin{split}
\frac{\bu_2^{n+1}}{\tau} - \nu \Delta \bu_2^{n+1} - \lambda \nabla \nabla \cdot \bu_2^{n+1} = \frac{\bu_2^{n}}{\tau} + \bef^{n+1}_2 - \nabla p_2^n \\
- \frac{\bN \l(\bu_0^n + 2 \tau \bu_1^n + 2 \tau^2 \bu_2^n \r) - 2 \bN \l(\bu_0^n + \tau \bu_1^n + \frac{\tau^2}{2} \bu_2^n \r) + \bN(\bu_0^n)}{\tau^2},
\end{split} \label{eq:GM_stage1} \\ \nonumber 
\\
\begin{split}p_2^{n+1} = p_2^n - \lambda \nabla \cdot  \bu^{n+1}_2, \end{split} \\ 
\nonumber \\
\begin{split}
\frac{\bu_1^{n+1}}{\tau} - \nu \Delta \bu_1^{n+1} - \lambda \nabla \nabla \cdot  \bu_1^{n+1} = \frac{\bu_1^{n}}{\tau} + \bef^{n+1}_1  - \nabla \l( p_1^n + \tau p_2^{n+1} \r) \\
-\frac{\tau}{2}\frac{\bu_2^{n+1} - \bu_2^n}{\tau}- \frac{\bN \l(\bu_0^n + 2 \tau \bu_1^n + 2 \tau^2 \bu_2^n \r) -  \bN(\bu_0^n)}{2 \tau},
\end{split}\label{eq:GM_stage2}  \\ \nonumber 
\\
\begin{split}p_1^{n+1} = p_1^n + \tau p_2^{n+1}    - \lambda \nabla \cdot \bu^{n+1}_1, \end{split} \\ \nonumber 
\\
\begin{split}
\frac{\bu_0^{n+1}}{\tau} - \nu \Delta \bu_0^{n+1} - \lambda \nabla \nabla \cdot  \bu_0^{n+1} = \frac{\bu_0^{n}}{\tau} + \bef^{n+1}_0 - \nabla \l( p_0^n + \tau p_1^{n+1} + \frac{\tau^2}{2} p_2^{n+1} \r) \\
-\frac{\tau}{2}\frac{\bu_1^{n+1} - \bu_1^n}{\tau} - \frac{\tau^2}{12} \frac{\bu_2^{n+1} - \bu_2^n}{\tau} - \bN \l(\bu_0^n + \tau \bu_1^n + \frac{\tau^2}{2} \bu_2^n \r),
\end{split} \label{eq:GM_stage3}  \\ \nonumber \\
\begin{split} p_0^{n+1} = p_0^n + \tau p_1^{n+1} - \frac{\tau^2}{2} p_2^{n+1} - \lambda \nabla \cdot  \bu^{n+1}_0, \end{split} \label{eq:GM_press_stage3}
\end{align}

\begin{proposition} 
The algorithm \eqref{eq:GM_stage1}-\eqref{eq:GM_press_stage3}, is unconditionally stable if $\bN(\bu) = 0$ and third-order accurate. \citep[Sec 3.2]{Guermond_Minev_2019}
\end{proposition}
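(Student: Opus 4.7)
The plan is to separate the claim into two independent parts---third-order local consistency for smooth exact solutions of the Navier--Stokes system, and unconditional stability when $\bN(\bu)=0$ (the unsteady Stokes case)---and to treat them with different tools.

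For the accuracy part I would proceed by a local truncation error analysis, examining the three velocity updates in the order Step~1, Step~2, Step~3. Starting from a smooth exact solution, substitute $\bu_k^n = \partial_t^k \bu(t^n)$ and $p_k^n = \partial_t^k p(t^n)$ into each stage and check that the discrete equation agrees with the $k$-th time-differentiated continuous momentum equation $\partial_t^{k+1}\bu - \nu\Delta \partial_t^k \bu + \nabla \partial_t^k p = \partial_t^k \bef - \partial_t^k \bN(\bu)$ to the stated order. The three-term stencil in Step~1 is a second-order centred difference approximation of $\partial_{tt}\bN(\bu)$ at $t^n$, whose division by $\tau^2$ leaves an $\mathcal{O}(\tau)$ local error as claimed. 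For Step~3 the coefficients $\tfrac{\tau}{2}$ and $\tfrac{\tau^2}{12}$ on the correction terms are forced by requiring $(\bu_0^{n+1}-\bu_0^n)/\tau + \tfrac{1}{2}(\bu_1^{n+1}-\bu_1^n) + \tfrac{\tau}{12}(\bu_2^{n+1}-\bu_2^n) = \bu_1^{n+1} + \mathcal{O}(\tau^3)$; substituting the Taylor identities $(\bu_0^{n+1}-\bu_0^n)/\tau = \bu_1^{n+1} - \tfrac{\tau}{2}\bu_2^{n+1} + \tfrac{\tau^2}{6}\bu_3^{n+1} + \mathcal{O}(\tau^3)$, $\bu_1^{n+1}-\bu_1^n = \tau\bu_2^{n+1} - \tfrac{\tau^2}{2}\bu_3^{n+1} + \mathcal{O}(\tau^3)$ and $\bu_2^{n+1}-\bu_2^n = \tau\bu_3^{n+1} + \mathcal{O}(\tau^2)$ produces a coefficient on $\tau^2\bu_3^{n+1}$ equal to $\tfrac{1}{6} - \tfrac{1}{4} + \tfrac{1}{12} = 0$, so $\tfrac{1}{12}$ is the unique choice that cancels the $\mathcal{O}(\tau^2)$ residual. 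An analogous matching fixes the pressure shifts $\tau p_1^{n+1} + \tfrac{\tau^2}{2} p_2^{n+1}$ and the Step~2 correction.

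For the stability part with $\bN(\bu)=0$ the three velocity updates decouple, so it suffices to analyse a single equation of the form $\tau^{-1}\bu^{n+1} - \nu\Delta\bu^{n+1} - \lambda\nabla\nabla\cdot\bu^{n+1} = \tau^{-1}\bu^n - \nabla p^n + \bef$, together with $p^{n+1} = p^n - \lambda\nabla\cdot\bu^{n+1}$. Testing the velocity equation with $\tau\bu^{n+1}$, integrating by parts and using the pressure update to eliminate $\nabla p^n$ produces the telescoping identity
\[
\tfrac12\|\bu^{n+1}\|^2 + \tfrac{\tau}{2\lambda}\|p^{n+1}\|^2 + \nu\tau\|\nabla\bu^{n+1}\|^2 + \tfrac{\lambda\tau}{2}\|\nabla\cdot\bu^{n+1}\|^2 \leq \tfrac12\|\bu^n\|^2 + \tfrac{\tau}{2\lambda}\|p^n\|^2 + \tau(\bef,\bu^{n+1}),
\]
after which summing over $n$ and invoking discrete Gronwall yields a bound independent of $\tau$. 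Applying the identity separately to $(\bu_0,p_0)$, $(\bu_1,p_1)$, $(\bu_2,p_2)$ delivers the three required energy estimates.

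The main obstacle I anticipate lies in the accuracy step: verifying that the errors already present in the computed $\bu_2^{n+1}$ (of order $\tau$) and $\bu_1^{n+1}$ (of order $\tau^2$) do not propagate catastrophically into Step~3 once they are multiplied by $\tfrac{\tau^2}{12}$ and $\tfrac{\tau}{2}$, together with the matching pressure shifts. Careful book-keeping of these propagated errors, the verification of the $\tfrac{1}{12}$ identity above, and the check that the surrogate $\bN(\bu^n+\tau\bu_1^n+\tfrac{\tau^2}{2}\bu_2^n)$ differs from $\bN(\bu^{n+1})$ only at order $\tau^3$, are precisely the steps that separate a genuinely third-order method from a merely second-order one.
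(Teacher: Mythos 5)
First, note that the paper itself offers no proof of this proposition: it is stated as a citation to Guermond and Minev (2019, Sec.~3.2), so there is no in-paper argument to compare against. Your overall strategy (local truncation analysis with Taylor matching for the order claim, telescoping energy identities for the unconditionally stable Stokes case) is the same strategy used in the cited reference, and your coefficient check $\tfrac{1}{6}-\tfrac{1}{4}+\tfrac{1}{12}=0$ for the Step~3 corrections is correct, as is the structure of the single-stage energy identity.

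There is, however, one concrete gap in the stability half. When $\bN(\bu)=0$ the three stages do \emph{not} decouple: the update for $\bu_1^{n+1}$ in \eqref{eq:GM_stage2} still contains the correction term $\tfrac{\tau}{2}\tfrac{\bu_2^{n+1}-\bu_2^n}{\tau}$ and the bootstrapped pressure $\nabla( p_1^n+\tau p_2^{n+1})$, and the update for $\bu_0^{n+1}$ in \eqref{eq:GM_stage3} contains $\tfrac{\tau}{2}\tfrac{\bu_1^{n+1}-\bu_1^n}{\tau}+\tfrac{\tau^2}{12}\tfrac{\bu_2^{n+1}-\bu_2^n}{\tau}$ together with $\nabla( p_0^n+\tau p_1^{n+1}+\tfrac{\tau^2}{2}p_2^{n+1})$. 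Your clean telescoping inequality therefore applies as written only to the topmost pair $(\bu_2,p_2)$; for the lower stages the test with $\tau\bu_k^{n+1}$ produces cross terms of the form $\tau(\bu_{k+1}^{n+1}-\bu_{k+1}^{n},\bu_k^{n+1})$ and $\tau^{j}(\nabla p_{k+j}^{n+1},\bu_k^{n+1})$ that must be absorbed using the increment and divergence controls already established at the higher stages. That absorption, not the single-stage identity, is where the actual difficulty of the unconditional-stability proof lies, and asserting decoupling skips it. A second, minor point: the increments $\bu_1^{n+1}-\bu_1^{n}$ and $\bu_2^{n+1}-\bu_2^{n}$ appearing in your accuracy expansion are increments of \emph{computed} quantities carrying errors of order $\tau^{2}$ and $\tau$ respectively, so the Taylor identities you substitute hold only up to those errors; you flag this as the anticipated obstacle, which is the right instinct, but the argument is not complete until that bookkeeping is carried out.
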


The nonlinear terms $\bN( \bu)$ in \eqref{eq:GM_stage1}-\eqref{eq:GM_press_stage3} are discretized explicitly to avoid solving an expensive nonlinear system. The time derivatives of the nonlinear terms are discretized using finite differences.

\begin{remark}
One of the novelties of \citep{Guermond_Minev_2015, Guermond_Minev_2019} is the technique used to bootstrap the pressure in \eqref{eq:GM_stage1}-\eqref{eq:GM_press_stage3}. The benefit of the proposed time-stepping method is that the divergence-free constraint is high-order accurate for $\lambda = 1$. As mentioned by \cite{Chen_Layton_2019}, another alternative to impose the divergence-free constraint more strongly is to work directly with the first-order method \eqref{eq:ac:const_dens}-\eqref{eq:ac:const_dens2}. By increasing $\lambda$, the divergence-free constraint is imposed more strongly, at the cost of a higher condition number of the resulting linear systems.
\end{remark}


\subsection{Fourth-order accurate time-stepping for variable density}
We now turn our attention to the main focus of this study, which is to follow the procedure outlined in \citep{Guermond_Minev_2019} to derive a corresponding fourth-order accurate time-stepping scheme for variable density flow. Repeated time differentiation of the continuity equation in the incompressible variable density Navier-Stokes equations \eqref{eq:cons_law_primitive} yields
\begin{align} \label{eq:rho_first}
\partial_t \rho_3 + \bu_0 \cdot \nabla \rho_3 + 3 \bu_1 \cdot \nabla \rho_2 + 3 \bu_2 \cdot \nabla \rho_1 + \bu_3 \cdot \nabla \rho_0  = 0, \\
\partial_t \rho_2 + \bu_0 \cdot \nabla \rho_2 + 2 \bu_1 \cdot \nabla \rho_1 + \bu_2 \cdot \nabla \rho_0   = 0,\label{eq:rho_2} \\
\partial_t \rho_1  + \bu_0 \cdot \nabla \rho_1 + \bu_1 \cdot \nabla \rho_0  = 0, \label{eq:rho_1}\\
\partial_t \rho_0 + \bu_0 \cdot \nabla \rho_0 = 0. \label{eq:rho_last}
\end{align}
Given $\bu_0^n, \bu_1^n, \bu_2^n, \bu_3^n$ and $\rho_0^n, \rho_1^n, \rho_2^n, \rho_3^n $, the idea is then to compute $\rho_0^{n+1}, \rho_1^{n+1}, \rho_2^{n+1}$ and $\rho_3^{n+1}$ using \eqref{eq:rho_first}-\eqref{eq:rho_last}. The full algorithm of this is presented in Section \ref{Sec:high_order_cont_update}. Then $\rho_0^{n+1}, \rho_1^{n+1}, \rho_2^{n+1}$, $\rho_3^{n+1}$ and $\bu_0^n$, $\bu_1^n$, $\bu_2^n$, $\bu_3^n$ are used to compute $\bu_0^{n+1}$, $\bu_1^{n+1}$, $\bu_2^{n+1}$, $\bu_3^{n+1}$ and $p_0^{n+1}$, $p_1^{n+1}$, $p_2^{n+1}$, $p_3^{n+1}$ via a similar velocity update as \eqref{eq:GM_stage1}-\eqref{eq:GM_press_stage3}. Again, we derive our velocity update by repeated time differentiation of the remaining governing equations in \eqref{eq:cons_law_primitive} with careful application of the product rule
\begin{eqnarray} \label{eq:mom_differentiation_3}
&\begin{split}
 \rho_0  \l( \partial_t \bu_3 + \partial_{ttt} \bN(\bu_0) \r) + 3 \rho_1 \l( \bu_3 + \partial_{tt} \bN(\bu_0) \r) \\ + 3 \rho_2 \l( \bu_2 + \partial_{t} \bN(\bu_0) \r) + \rho_3 \l( \bu_1 + \bN(\bu_0) \r)   - \mu \Delta \bu_3 +  \nabla p_3 = \bef_3(t), \end{split} \quad & \nabla \cdot \bu_3 = 0, \\ \nonumber \\ \label{eq:mom_differentiation_2}
&\begin{split}
 \rho_0  \l( \partial_t \bu_2 + \partial_{tt} \bN(\bu_0) \r) + 2 \rho_1 \l( \bu_2 + \partial_{t} \bN(\bu_0) \r) \\ + \rho_2 \l( \bu_1 + \bN(\bu_0) \r)  - \mu \Delta \bu_2 +  \nabla p_2 = \bef_2(t), \end{split} \quad & \nabla \cdot \bu_2 = 0, \\ \nonumber \\ \label{eq:mom_differentiation_1}
&\begin{split}
 \rho_0  \l( \partial_t \bu_1 + \partial_t \bN(\bu_0) \r) + \rho_1 \l( \bu_1 + \bN(\bu_0) \r)  - \mu \Delta \bu_1 +  \nabla p_1 = \bef_1(t), \end{split} \quad & \nabla \cdot \bu_1 = 0, \\ \nonumber \\ \label{eq:mom_differentiation_0}
&\rho_0 \l( \partial_t \bu_0 + \bN(\bu_0) \r) - \mu \Delta \bu_0 + \nabla p_0 = \bef_0(t), \quad & \nabla \cdot \bu_0 = 0.
\end{eqnarray}

To handle the divergence-free constraint in \eqref{eq:mom_differentiation_3}-\eqref{eq:mom_differentiation_0} the artificial compressibility method will be utilized. When using this method, bootstrapping of the pressure is necessary for high-order extensions and, here, we follow exactly the method used in the constant density case \eqref{eq:GM_stage1}-\eqref{eq:GM_press_stage3}. Similarly, the nonlinear terms $\bN( \bu)$ are discretized explicitly and the time derivatives of these are discretized using finite differences.

We now introduce some notation which will be helpful when writing out the scheme. Frequently, extrapolated solutions of time level $n+s$, where $s$ is an integer, will be used. These are denoted as $(\tilde{\bu}_0^{n+s}, \tilde{\rho}_0^{n+s})$ and are given by the following extrapolation formulas
\begin{align}
\begin{split}
\tilde{\bu}_0^{n+s} = \bu_0^n + s\tau \bu_1^n + \frac{(s \tau)^2}{2} \bu_2^n +  \frac{(s\tau)^3}{6 } \bu_3^n, \\
 \tilde{\rho}_0^{n+s} = \rho_0^n + s \tau \rho_1^n + \frac{(s \tau)^2}{2} \rho_2^n +  \frac{(s \tau)^3}{6 } \rho_3^n.
\end{split}
\end{align}
Similarly, extrapolated solutions of the time derivatives of time level $n+s$ are given below
\begin{align}
\begin{split}
\tilde{\bu}_1^{n+s} = \bu_1^n + s\tau \bu_2^n + \frac{(s \tau)^2}{2} \bu_3^n, \quad \tilde{\bu}_2^{n+s} = \bu_2^n + s\tau \bu_3^n, \\
 \tilde{\rho}_1^{n+s} = \rho_1^n + s \tau \rho_2^n + \frac{(s \tau)^2}{2} \rho_3^n, \quad \tilde{\rho}_2^{n+s} = \rho_2^n + s \tau \rho_3^n.
\end{split}
\end{align}

\subsubsection{Continuity update} \label{Sec:high_order_cont_update}
In this section we present the time-stepping algorithm for the continuity update. The aim is to discretize \eqref{eq:rho_first}-\eqref{eq:rho_last} using the Taylor series technique. We propose the following discretization of \eqref{eq:rho_first}: Given $\bu_0^n, \bu_1^n, \bu_2^n, \bu_3^n$ and $\rho_0^n, \rho_1^n, \rho_2^n, \rho_3^n$, compute $\rho_3^{n+1}$ as follows:
\begin{equation} \label{eq:3_con_update}
\begin{split}
\frac{\rho_3^{n+1} - \rho_3^{n}}{\tau}  + \tilde{\bu}_0^{n+1} \cdot \nabla \rho_3^{n+1}  + 3 \tilde{\bu}_1^{n+1} \cdot \nabla \tilde{\rho}_2^{n+1} + 3 \tilde{\bu}_2^{n+1} \cdot \nabla \tilde{\rho}_1^{n+1}   + \tilde{\bu}_3^{n+1} \cdot \nabla \tilde{\rho}_0^{n+1} =  \sigma_{h,3} \Delta \rho_3^{n+1},
\end{split}
\end{equation}
where the term $ \sigma_{h,3} \Delta \rho_3^{n+1}$ is an artificial viscosity term which we use to stabilize $\rho_3^{n+1}$ in space. The update \eqref{eq:3_con_update} leads to a first-order (in time) approximation of $\rho_3^{n+1}$. Next, \eqref{eq:rho_2} is discretized as follows
\begin{equation}
\begin{split}
\frac{\rho_2^{n+1} - \rho_2^{n}}{\tau} + \frac{\tau}{2} \frac{\rho_3^{n+1}- \rho_3^n}{\tau}  + \tilde{\bu}_0^{n+1} \cdot \nabla \rho_2^{n+1}  +  2 \tilde{\bu}_1^{n+1} \cdot \nabla \tilde{\rho}_1^{n+1}  + \tilde{\bu}_2^{n+1} \cdot \nabla \tilde{\rho}_0^{n+1} = \sigma_{h,2} \Delta \rho_2^{n+1},
\end{split}
\end{equation}
where, again, the term $ \sigma_{h,2} \Delta \rho_2^{n+1}$ is an artificial viscosity term and the correction term $ \frac{\tau}{2} \frac{\rho_3^{n+1} - \rho_3^n}{\tau}$ makes sure that $\rho_2^{n+1}$ is second-order accurate in time. Next, \eqref{eq:rho_1} is discretized leading to
\begin{equation}
\begin{split}
 \frac{\rho_1^{n+1}-\rho_1^{n}}{\tau}  + \frac{\tau}{2} \frac{\rho_2^{n+1} - \rho_2^n}{\tau} + \frac{\tau^2}{12} \frac{\rho_3^{n+1} - \rho_3^n}{\tau}  + \tilde{\bu}_0^{n+1} \cdot \nabla \rho_1^{n+1}  + \tilde{\bu}_1^{n+1} \cdot \nabla \tilde{\rho}_0^{n+1} =  \sigma_{h,1} \Delta \rho_1^{n+1}, \label{eq:1_con_update}
\end{split}
\end{equation}
where, again, $\sigma_{h,1} \Delta \rho_1^{n+1}$ is an artificial viscosity term and the correction terms $ \frac{\tau}{2} \frac{\rho_2^{n+1} + \rho_2^n}{\tau} - \frac{\tau^2}{12} \frac{\rho_3^{n+1} - \rho_3^n}{\tau}$ ensure that $\rho_1^{n+1}$ is third-order accurate in time. Lastly, \eqref{eq:rho_last} is discretized leading to
\begin{equation}
\begin{split}
\frac{\rho_0^{n+1}-\rho_0^{n}}{\tau} + \frac{\tau}{2} \frac{\rho_1^{n+1} - \rho_1^n}{\tau} + \frac{\tau^2}{12} \frac{\rho_2^{n+1} - \rho_2^n}{\tau} + \tilde{\bu}_0^{n+1} \cdot \nabla \rho_0^{n+1} =  \sigma_{h} \Delta \rho_0^{n+1}  , \label{eq:final_con_update}
\end{split}
\end{equation}
where, again, $ \sigma_{h} \Delta \rho_0^{n+1}$ is an artificial viscosity term and the correction terms $\frac{\tau}{2} \frac{\rho_1^{n+1} - \rho_1^n}{\tau} + \frac{\tau^2}{12} \frac{\rho_2^{n+1} - \rho_2^n}{\tau}$ ensure that $\rho_0^{n+1}$ is fourth-order accurate in time. How the artificial viscosities ${\sigma}_{h,3}$, ${\sigma}_{h,2}$, ${\sigma}_{h,1}$ and ${\sigma}_{h}$ are chosen is explained in the numerical experiments in Section \ref{Sec:computations}.

\subsubsection{Velocity and pressure update} \label{Sec:high_order_mom_update}
In this section we present the time-stepping algorithm for the momentum update. The aim is to discretize \eqref{eq:mom_differentiation_3}-\eqref{eq:mom_differentiation_0} using the Taylor series technique. We follow the exact same technique as \citep{Guermond_Minev_2019}, i.e. the divergence-free constraint is imposed in the same way and we treat the nonlinear terms explicitly with appropriate finite differences. We propose the following discretization of \eqref{eq:mom_differentiation_3}: Given $\bu_0^n, \bu_1^n, \bu_2^n, \bu_3^n$ and $\rho_0^{n+1}, \rho_1^{n+1}, \rho_2^{n+1}, \rho_3^{n+1}$, compute $\bu_3^{n+1}$ and $p_3^{n+1}$ as follows:
 \begin{equation} \label{eq:3_mom_update}
 \begin{split}
\rho_0^{n+1} \l( \frac{\bu_3^{n+1} - \bu_3^{n}}{\tau} +  \frac{N(\tilde{\bu}_0^{n+3}) - 3 N(\tilde{\bu}_0^{n+2} )+ 3 N(\tilde{\bu}_0^{n+1} ) - N(\bu_0^n)}{\tau^3} \r) \\ + 3 \rho_1^{n+1} \l(  \bu_3^{n+1} + \frac{N(\tilde{\bu}_0^{n+2}) - 2 N(\tilde{\bu}_0^{n+1} ) + N(\bu_0^n)}{\tau^2} \r) \\ + 3 \rho_2^{n+1} \l( \tilde{\bu}_2^{n+1} + \frac{ -\frac{1}{6}N( \tilde{\bu}_0^{n+3}) + N( \tilde{\bu}_0^{n+2} ) - \frac{1}{2} N( \tilde{\bu}_0^{n+1} ) - \frac{1}{3} N(\bu_0^n ) }{\tau } \r) \\ + \rho_3^{n+1} \l( \tilde{\bu}_1^{n+1} +  N \left(\tilde{\bu}_0^{n+1} \right) \r)  - {\nu}_{h,3} \rho_0^{n+1} \Delta \bu_3^{n+1}  - \lambda \nabla \nabla \cdot \bu_3^{n+1} + \nabla p_3^n   = \bef_3^{n+1},
\end{split}
\end{equation}
\begin{align}
 \begin{split}
p_3^{n+1} = p_3^n - \lambda \nabla \cdot \bu^{n+1}_3,
\end{split}
\end{align}
where the term $ - {\nu}_{h,3} \rho_0^{n+1}  \Delta \bu_3^{n+1}$ is an artificial viscosity term used to stabilize $\bu_3^{n+1}$ in space. Next, \eqref{eq:mom_differentiation_2} is discretized as follows
\begin{equation} \label{eq:2_mom_update}
\begin{split}
 \rho_0^{n+1} \l( \frac{\bu_2^{n+1} - \bu_2^{n}}{\tau} +  \frac{\tau}{2} \frac{\bu_3^{n+1} - \bu_3^{n}}{\tau} + \frac{N(\tilde{\bu}_0^{n+2}) - 2 N(\tilde{\bu}_0^{n+1} ) + N(\bu_0^n)}{\tau^2}\r) \\ + 2 \rho_1^{n+1} \l( \bu_2^{n+1} + \frac{ -\frac{1}{6}N( \tilde{\bu}_0^{n+3}) + N( \tilde{\bu}_0^{n+2} ) - \frac{1}{2} N( \tilde{\bu}_0^{n+1} ) - \frac{1}{3} N(\bu_0^n ) }{\tau }  \r) \\
 + \rho_2^{n+1} \l( \tilde{\bu}_1^{n+1} + N \left(\tilde{\bu}_0^{n+1} \right) \r)  - {\nu}_{h,2} \rho_0^{n+1}  \Delta \bu_2^{n+1}  - \lambda \nabla \nabla \cdot \bu_2^{n+1} +  \nabla (p_2^n + \tau p_3^{n+1} )  =   \bef_2^{n+1},
\end{split}
\end{equation}
\begin{align}
\begin{split}
p_2^{n+1} = p_2^n + \tau p_3^{n+1} - \lambda \nabla \cdot \bu^{n+1}_2,
\end{split}   
\end{align}
where, again, $ {\nu}_{h,2} \rho_0^{n+1}  \Delta \bu_2^{n+1}$ is an artificial viscosity term and the correction term $\frac{\tau}{2} \frac{\bu_3^{n+1} - \bu_3^{n}}{\tau}$ ensures that $\bu_2^{n+1}$ is second-order accurate in time. Next, \eqref{eq:mom_differentiation_1} is discretized as follows
\begin{equation}
\begin{split}
 & \rho_0^{n+1} \l( \frac{\bu_1^{n+1}-\bu_1^{n}}{\tau} + \frac{\tau}{2} \frac{\bu_2^{n+1} -\bu_2^n}{\tau} + \frac{\tau^2}{12} \frac{\bu_3^{n+1} -\bu_3^n}{\tau}  \right. \\  & \l.
 +\frac{ -\frac{1}{6}N( \tilde{\bu}_0^{n+3})  + N( \tilde{\bu}_0^{n+2} ) - \frac{1}{2} N( \tilde{\bu}_0^{n+1} ) - \frac{1}{3} N(\bu_0^n ) }{\tau } \r) \\ &
 + \rho_1^{n+1} \l( \bu_1^{n+1} +  N \left(\tilde{\bu}_0^{n+1} \right) \r) - {\nu}_{h,1} \rho_0^{n+1}  \Delta \bu_1^{n+1}  - \lambda \nabla \nabla \cdot \bu_1^{n+1} + \nabla \l( p_1^n + \tau p_2^{n+1} - \frac{\tau^2}{2}p_3^{n+1}  \r)  =  \bef_1^{n+1}, \label{eq:1_mom_update}  
\end{split}
\end{equation}
\begin{align}
\begin{split}
p_1^{n+1} = p_1^n + \tau p_2^{n+1} - \frac{\tau^2}{2} p_3^{n+1}   - \lambda \nabla \cdot \bu^{n+1}_1,
\end{split}
\end{align}
where, again, $ {\nu}_{h,1} \rho_0^{n+1}  \Delta \bu_1^{n+1}$ is an artificial viscosity term and the correction terms $\\ \frac{\tau}{2} \frac{\bu_2^{n+1} -\bu_2^n}{\tau} + \frac{\tau^2}{12} \frac{\bu_3^{n+1} -\bu_3^n}{\tau}$ ensure that $\bu_1^{n+1}$ is third-order accurate in time. Lastly, \eqref{eq:mom_differentiation_0} is discretized as follows
\begin{equation}
\begin{split}
 \rho_0^{n+1} \l( \frac{\bu_0^{n+1}-\bu_0^{n}}{\tau}  +  \frac{\tau}{2} \frac{\bu_1^{n+1} - \bu_1^{n}}{\tau} + \frac{\tau^2}{12} \frac{\bu_2^{n+1} - \bu_2^n}{\tau} +  N \left(\tilde{\bu}_0^{n+1} \right) \r)\\ - {\nu}_{h} \rho_0^{n+1}  \Delta \bu_0^{n+1} - \lambda \nabla \nabla \cdot \bu_0^{n+1} + \nabla \left( p_0^n + \tau p_1^{n+1} - \frac{\tau^2}{2} p_2^{n+1} + \frac{\tau^3}{6} p_3^{n+1} \right) = \bef_0^{n+1} , \label{eq:final_mom_update} 
\end{split}
\end{equation}
\begin{align}
\begin{split}
 p_0^{n+1} = p_0^n + \tau p_1^{n+1} - \frac{\tau^2}{2} p_2^{n+1} + \frac{\tau^3}{6} p_3^{n+1} - \lambda \nabla \cdot \bu^{n+1}_0,
\end{split}
\end{align}
where, again, ${\nu}_{h} \rho_0^{n+1}  \Delta \bu_0^{n+1}$ is an artificial viscosity term and the correction terms $ \frac{\tau}{2} \frac{\bu_1^{n+1} - \bu_1^{n}}{\tau} + \frac{\tau^2}{12} \frac{\bu_2^{n+1} - \bu_2^n}{\tau} $ ensures that $\bu_0^{n+1}$ is fourth-order accurate in time. How the artificial viscosities \commentB{${\nu}_{h,3}$, ${\nu}_{h,2}$, ${\nu}_{h,1}$ and ${\nu}_{h}$} are chosen is explained in the numerical experiments in Section \ref{Sec:computations}.

\subsection{Initialization} \label{Sec:initialitaztion}
Ideally, if initial conditions for the solutions and their time derivatives are available they should be used. This is not generally the case though, and to this end, \cite{Guermond_Minev_2019} provided an initialization technique based on Richardson extrapolation. In this section, we extend their technique to fourth-order accuracy and variable density. 

First, for a given initial velocity $\bu^0(\bx)$ and density $\rho^0 ( \bx )$, the initial pressure $p^0(\bx)$ is obtained by solving 

\begin{equation} \label{eq:init_pressure_varden}
\begin{split}
\nabla \cdot \l( \l( \rho^0 \r)^{-1} \nabla p^0 \r) = \nabla \cdot \l( \l( \rho^0 \r)^{-1} \l( \bef(0) + \mu \Delta \bu^0 \r) - \bu^0 \cdot \nabla \bu^0 \r) \\  \partial_n p^0 = (\bef(0) + \mu \Delta \bu^0 - \rho^0 \bu^0 \cdot \nabla \bu^0) \cdot \bn |\Gamma.
\end{split}
\end{equation}

The next step is to use the first-order algorithm from Section \ref{Sec:fem_approx} to construct solutions from different time levels using different time-steps. That different time-steps are used is important to be able to use Richardson extrapolation. This is used to construct fourth-order accurate solutions. Lastly, using appropriately accurate finite difference formulas, the time derivatives are constructed. For additional details on the motivation behind this procedure, we refer to \citep[Sec 3.3]{Guermond_Minev_2019}.

Let $(\rho_{\tau/k}^n,\bu_{\tau/k}^n, p_{\tau/k}^n)$ be solutions obtained at $t = n \tau$ with the time-step $\tau/k$ using the first-order algorithm from Section \ref{Sec:fem_approx}. The initialization algorithm proceeds as follows:

\begin{algorithm}[H] 
\caption{Initialization algorithm.} \label{rich_init}
\tcc{Initial pressure}

Compute $p^0$ by solving \eqref{eq:init_pressure_varden}.

\tcc{Use first-order algorithm from Section \ref{Sec:fem_approx} to compute solutions using time-steps $\tau/4$, $\tau/3$, $\tau/2$ and $\tau$.}

Compute $(\rho_{\tau/4}^1,\bu_{\tau/4}^1, p_{\tau/4}^1)$, $(\rho_{\tau/4}^2,\bu_{\tau/4}^2, p_{\tau/4}^2)$ and $(\rho_{\tau/4}^3,\bu_{\tau/4}^3, p_{\tau/4}^3)$.

Compute $(\rho_{\tau/3}^1,\bu_{\tau/3}^1, p_{\tau/3}^1)$, $(\rho_{\tau/3}^2,\bu_{\tau/3}^2, p_{\tau/3}^2)$ and $(\rho_{\tau/3}^3,\bu_{\tau/3}^3, p_{\tau/3}^3)$.

Compute $(\rho_{\tau/2}^1,\bu_{\tau/2}^1, p_{\tau/2}^1)$, $(\rho_{\tau/2}^2,\bu_{\tau/2}^2, p_{\tau/2}^2)$ and $(\rho_{\tau/2}^3,\bu_{\tau/2}^3, p_{\tau/2}^3)$.

Compute $(\rho_{\tau}^1,\bu_{\tau}^1, p_{\tau}^1)$, $(\rho_{\tau}^2,\bu_{\tau}^2, p_{\tau}^2)$ and $(\rho_{\tau}^3,\bu_{\tau}^3, p_{\tau}^3)$.

\tcc{Compute fourth-order accurate solutions $(\rho_{R}^{1,2,3},\bu_{R}^{1,2,3}, p_{R}^{1,2,3})$ using a Richardson extrapolation formula, where the subscript $R$ denotes that the solution has been computed using the Richardson extrapolation formula below.}

$\phi_R^n = \frac{5}{42} \phi_{\tau}^n + \frac{4}{7} \phi_{\tau/2}^n - \frac{81}{14} \phi_{\tau/3}^n + \frac{128}{21} \phi_{\tau/4}^n$

\tcc{Compute initial conditions of the time derivatives using finite difference formulas.}


$\phi_t(\tau)= \frac{- \phi_R^3 + 6 \phi_R^2 - 3 \phi_R^1 -2 \phi(0)}{6 \tau} $

$\phi_{tt}(\tau)= \frac{ \phi_R^2 - 2 \phi_R^1 + \phi(0)}{ \tau^2} $

$\phi_{ttt}(\tau)= \frac{ \phi_R^3 - 3 \phi_R^2 + 3 \phi_R^1 - \phi(0)}{\tau^3} $
%
%
%
%

\end{algorithm}

\subsection{Time adaptivity} \label{Sec:adaptivity} 
There are many approaches to time adaptivity. In this manuscript, we don’t focus on this and only follow the simple time-step control algorithm proposed by \cite[Sec 5.4]{Guermond_Minev_2019}. Given a user-specified tolerance TOL, Algorithm \ref{algorithm_variable_density} describes how to choose each time-step based on the CFL condition
and an estimation of the local error. In addition to the CFL number and the tolerance TOL, the user has to specify two constants: the maximum growth rate $s_{max}$ and
minimum decrease rate $s_{min}$ of each calculated time-step. If the next calculated time
step has decreased by less than $s_{min}$, the computation should be redone. Since our focus is not on time-adaptivity, we only choose timestep based on stability, i.e. CFL condition, and not based on error tolerances. In practice this means
that we choose $\textrm{TOL} = \infty$  in all our numerical examples.

\begin{algorithm}[H]
\caption{Time step control algorithm \citep[Sec 5.4]{Guermond_Minev_2019} extended to variable density.} \label{algorithm_variable_density}
\tcc{Compute time-step based on CFL condition}
$s_{cfl} = \textrm{CFL} \min_{\bx \in \Omega} h(\bx) / (\| \bu_0^n (\bx)\|_{l^2} \tau^n)$

\tcc{Estimate local time error for density}
$e_{loc, \rho} = ...$

\tcc{Estimate local time error for velocity}
$e_{loc, \bu} = ...$

\tcc{Compute time-step increment based on user-specified tolerance}

$s_{\bu} = \textrm{TOL}/\l( e_{loc, \bu} \tau^n \r)$ 

$s_{\rho} = \textrm{TOL}/\l( e_{loc, \rho} \tau^n \r)$ 

\tcc{Set next time-step based on both stability and user-specified tolerance}
$s = \min{ \l( s_{cfl}, s_{\bu}, s_{\rho}, s_{max} \r) }$

$\tau^{n+1} = s \tau^n$

\If{$s < s_{min}$}{Repeat previous time-step with $\tau^{n+1}$ instead}

\Return{Time step $\tau^{n+1}$ and flag whether to repeat time-step or not}
\end{algorithm}

%
\section{Boundary conditions} \label{Sec:boundary_conditions}
The boundary conditions in this work are only imposed on velocity and density. For velocity, the problems we solve only involve Dirichlet boundary conditions and slip boundary conditions. In both cases, these boundary conditions are imposed strongly through the linear system. For details on how to impose slip boundary conditions in this way, we refer to \cite[Sec 4.1.2]{Nazarov_Larcher_2017}.

The velocity field in the problems we consider in this work fulfills the criteria $\bu \cdot \bn = 0$, therefore no Dirichlet boundary condition for the density should be set since there is no inflow.

\section{Numerical examples} \label{Sec:computations}
In this section, we test the method against some benchmarks from the literature. The temporal discretization is described in Sections \ref{Sec:high_order_cont_update} and \ref{Sec:high_order_mom_update}. The spatial discretization for each sub-stage of the time-stepping algorithm is described in Section \ref{Sec:fem_approx}. Since the time-stepping algorithm is fourth-order accurate we use $\polP_3$ finite elements for the spatial discretization to get an expected accuracy of four in space and time. To satisfy the inf-sup condition we use $\polP_2$ elements for the pressure.

\subsection{Accuracy test} \label{Sec:accuracy test}
In this section we verify the accuracy of the proposed method by using a manufactured solution on a unit disk. We follow the setup from \cite{Guermond_Salgado_2009} where the forcing function $\bef$ is chosen to obtain the following exact solution
\begin{equation}
\begin{split}
&\rho(\bx,t) = 2 + x \cos( \sin(t) ) + y \sin ( \sin(t) ),\\
&\bu(\bx,t) =  \begin{bmatrix}
-y \cos(t) \\
x \cos(t)
\end{bmatrix}, \\
&p(\bx,t) = \sin(x) \sin(y) \sin(t).
\end{split}
\end{equation}

We perform a convergence study using a series of unstructured meshes. The dynamic viscosity is set to $\mu = 1$ and we set the stabilization coefficients used in Sections \ref{Sec:high_order_cont_update} and \ref{Sec:high_order_mom_update} to zero (i.e. $\sigma_{h,1,2,3} = 0$ and $\nu_{h,1,2,3} = 0$). The termination time is set to $T = 10$. The time-step is set to be constant $\Delta t = \sqrt{2} h_{min}$ which corresponds to CFL = 1, since $\max_{(\Omega \times [0,T])} \| \bu \|_{l^2} = \sqrt{2}$. The minimum grid spacing $h_{min}$ is given by $h_{min} := \min_{K \in \mathcal{T}_h} h_K$. 

Due to the nature of the time-stepping algorithm, the corresponding analytic time derivatives of $\bef$ were used for each time derivative update. Additionally, we set Dirichlet boundary conditions for the velocity and its time derivatives. We present convergence results in Tables \ref{table:velocity_P3P3P2_ic}-\ref{table:pressure_P3P3P2}. In Tables \ref{table:velocity_P3P3P2_ic}, \ref{table:density_P3P3P2_ic} and \ref{table:pressure_P3P3P2_ic} the analytic time derivatives were used as initial conditions for the time derivatives. For the purpose of validation, we also present results where the time derivatives were initialized using the Richardson initialization algorithm (Algorithm \ref{rich_init}) in Tables \ref{table:velocity_P3P3P2}, \ref{table:density_P3P3P2} and \ref{table:pressure_P3P3P2}. 

The $L_1$, $L_2$ and $L_\infty$ errors are presented for all components. All the errors are computed using high-order quadrature and are relative, i.e. they are normalized with their corresponding norm. If the time derivatives are used as an initial condition we were able to obtain the expected high-order convergence rate using $\lambda = 1$ similar to \cite[Sec 5.1]{Guermond_Minev_2019}. However, when Algorithm \ref{rich_init} was used as initialization, we were unable to achieve this using a small $\lambda$. By increasing $\lambda$ it was possible to obtain the correct convergence rate. In Section \ref{Sec:lambda_study} we investigate how $\lambda$ affects the error in more detail by doing a parameter sweep.


\begin{table}[H]
\centering     
\caption{Velocity $\polP_3 \polP_3 \polP_2$. $\lambda = 1$. Initial condition used for all time derivatives.}
\begin{tabular}{|c|c|c|c|c|c|c|c|}    
\hline
&\multicolumn{1}{c|}{$\#$ dofs} & \multicolumn{1}{c|}{$L_1$} & \multicolumn{1}{c|}{rate} & \multicolumn{1}{c|}{$L_2$} & \multicolumn{1}{c|}{rate} & \multicolumn{1}{c|}{$L_{\infty}$} & \multicolumn{1}{c|}{rate}\\
 \hline
\parbox[t]{2mm}{\multirow{6}{*}{\rotatebox[origin=c]{90}{Galerkin}}} & 
       200 &   1.84E-03 &   0.00 &   2.11E-03 &   0.00 &   2.43E-03 &   0.00 \\ 
&       386 &   7.41E-04 &   2.76 &   7.39E-04 &   3.19 &   6.68E-04 &   3.93 \\ 
&       794 &   1.83E-04 &   3.87 &   1.89E-04 &   3.79 &   2.06E-04 &   3.26 \\ 
&      3314 &   1.17E-05 &   3.85 &   1.36E-05 &   3.68 &   1.56E-05 &   3.61 \\ 
&     13184 &   7.03E-07 &   4.07 &   6.96E-07 &   4.31 &   6.46E-07 &   4.61 \\ 
&     52184 &   2.34E-08 &   4.95 &   2.89E-08 &   4.62 &   3.52E-08 &   4.23 \\  \hline
\end{tabular}                                                                   
\label{table:velocity_P3P3P2_ic}                                                      
\end{table}

\begin{table}[H]
\centering     
\caption{Density $\polP_3 \polP_3 \polP_2$. $\lambda = 1$. Initial condition used for all time derivatives.}
\begin{tabular}{|c|c|c|c|c|c|c|c|}    
\hline
&\multicolumn{1}{c|}{$\#$ dofs} & \multicolumn{1}{c|}{$L_1$} & \multicolumn{1}{c|}{rate} & \multicolumn{1}{c|}{$L_2$} & \multicolumn{1}{c|}{rate} & \multicolumn{1}{c|}{$L_{\infty}$} & \multicolumn{1}{c|}{rate}\\
 \hline
\parbox[t]{2mm}{\multirow{6}{*}{\rotatebox[origin=c]{90}{Galerkin}}} & 
       100 &   2.37E-03 &   0.00 &   2.69E-03 &   0.00 &   3.32E-03 &   0.00 \\ 
&       193 &   7.47E-04 &   3.51 &   8.47E-04 &   3.51 &   1.06E-03 &   3.48 \\ 
&       397 &   1.31E-04 &   4.83 &   1.49E-04 &   4.81 &   1.89E-04 &   4.77 \\ 
&      1657 &   3.03E-06 &   5.27 &   3.68E-06 &   5.19 &   5.40E-06 &   4.98 \\ 
&      6592 &   1.64E-07 &   4.23 &   1.91E-07 &   4.28 &   2.59E-07 &   4.40 \\ 
&     26092 &   1.18E-08 &   3.83 &   1.37E-08 &   3.83 &   1.85E-08 &   3.84 \\   \hline
\end{tabular}                                                                   
\label{table:density_P3P3P2_ic}                                                      
\end{table}

\begin{table}[H]
\centering     
\caption{Pressure $\polP_3 \polP_3 \polP_2$. $\lambda = 1$. Initial condition used for all time derivatives.}
\begin{tabular}{|c|c|c|c|c|c|c|c|}    
\hline
&\multicolumn{1}{c|}{$\#$ dofs} & \multicolumn{1}{c|}{$L_1$} & \multicolumn{1}{c|}{rate} & \multicolumn{1}{c|}{$L_2$} & \multicolumn{1}{c|}{rate} & \multicolumn{1}{c|}{$L_{\infty}$} & \multicolumn{1}{c|}{rate}\\
 \hline
\parbox[t]{2mm}{\multirow{6}{*}{\rotatebox[origin=c]{90}{Galerkin}}} & 
        48 &   7.98E-02 &   0.00 &   7.79E-02 &   0.00 &   1.71E-01 &   0.00 \\ 
&        91 &   2.68E-02 &   3.41 &   2.48E-02 &   3.58 &   2.38E-02 &   6.16 \\ 
&       184 &   6.81E-03 &   3.89 &   6.26E-03 &   3.92 &   5.63E-03 &   4.10 \\ 
&       751 &   7.53E-04 &   3.13 &   6.96E-04 &   3.12 &   7.08E-04 &   2.95 \\ 
&      2958 &   4.06E-05 &   4.26 &   3.81E-05 &   4.24 &   5.43E-05 &   3.75 \\ 
&     11652 &   1.59E-06 &   4.73 &   1.74E-06 &   4.50 &   3.79E-06 &   3.88 \\  \hline
\end{tabular}                                                                   
\label{table:pressure_P3P3P2_ic}                                                      
\end{table}

\begin{table}[H]
\centering     
\caption{Velocity $\polP_3 \polP_3 \polP_2$. $\lambda = 10000$. Richardson initialization.}
\begin{tabular}{|c|c|c|c|c|c|c|c|}    
\hline
&\multicolumn{1}{c|}{$\#$ dofs} & \multicolumn{1}{c|}{$L_1$} & \multicolumn{1}{c|}{rate} & \multicolumn{1}{c|}{$L_2$} & \multicolumn{1}{c|}{rate} & \multicolumn{1}{c|}{$L_{\infty}$} & \multicolumn{1}{c|}{rate}\\
 \hline
\parbox[t]{2mm}{\multirow{6}{*}{\rotatebox[origin=c]{90}{Galerkin}}} & 
       200 &   2.21E-06 &   0.00 &   2.40E-06 &   0.00 &   2.78E-06 &   0.00 \\ 
&       386 &   3.49E-07 &   5.62 &   3.72E-07 &   5.67 &   4.62E-07 &   5.46 \\ 
&       794 &   7.53E-08 &   4.25 &   8.20E-08 &   4.19 &   1.05E-07 &   4.10 \\ 
&      3314 &   3.58E-09 &   4.26 &   3.85E-09 &   4.28 &   4.96E-09 &   4.28 \\ 
&     13184 &   2.23E-10 &   4.02 &   2.49E-10 &   3.97 &   3.43E-10 &   3.87 \\ 
&     52184 &   4.87E-10 &  -1.13 &   5.01E-10 &  -1.02 &   5.57E-10 &  -0.71 \\ \hline
\end{tabular}                                                                   
\label{table:velocity_P3P3P2}                                                      
\end{table}

\begin{table}[H]
\centering     
\caption{Density $\polP_3 \polP_3 \polP_2$. $\lambda = 10000$. Richardson initialization.}
\begin{tabular}{|c|c|c|c|c|c|c|c|}    
\hline
&\multicolumn{1}{c|}{$\#$ dofs} & \multicolumn{1}{c|}{$L_1$} & \multicolumn{1}{c|}{rate} & \multicolumn{1}{c|}{$L_2$} & \multicolumn{1}{c|}{rate} & \multicolumn{1}{c|}{$L_{\infty}$} & \multicolumn{1}{c|}{rate}\\
 \hline
\parbox[t]{2mm}{\multirow{6}{*}{\rotatebox[origin=c]{90}{Galerkin}}} & 
       100 &   8.50E-05 &   0.00 &   9.74E-05 &   0.00 &   1.58E-04 &   0.00 \\ 
&       193 &   1.34E-05 &   5.61 &   1.53E-05 &   5.64 &   2.06E-05 &   6.20 \\ 
&       397 &   2.30E-06 &   4.90 &   2.59E-06 &   4.92 &   3.35E-06 &   5.04 \\ 
&      1657 &   1.19E-07 &   4.15 &   1.35E-07 &   4.14 &   1.83E-07 &   4.07 \\ 
&      6592 &   5.27E-09 &   4.51 &   6.23E-09 &   4.45 &   9.48E-09 &   4.28 \\ 
&     26092 &   4.63E-10 &   3.54 &   5.72E-10 &   3.47 &   9.85E-10 &   3.29 \\   \hline
\end{tabular}                                                                   
\label{table:density_P3P3P2}                                                 
\end{table}

\begin{table}[H]
\centering     
\caption{Pressure $\polP_3 \polP_3 \polP_2$. $\lambda = 10000$. Richardson initialization.}
\begin{tabular}{|c|c|c|c|c|c|c|c|}    
\hline
&\multicolumn{1}{c|}{$\#$ dofs} & \multicolumn{1}{c|}{$L_1$} & \multicolumn{1}{c|}{rate} & \multicolumn{1}{c|}{$L_2$} & \multicolumn{1}{c|}{rate} & \multicolumn{1}{c|}{$L_{\infty}$} & \multicolumn{1}{c|}{rate}\\
 \hline
\parbox[t]{2mm}{\multirow{6}{*}{\rotatebox[origin=c]{90}{Galerkin}}} & 
        48 &   3.18E-03 &   0.00 &   4.89E-03 &   0.00 &   1.36E-02 &   0.00 \\ 
&        91 &   1.38E-03 &   2.61 &   1.95E-03 &   2.87 &   5.09E-03 &   3.07 \\ 
&       184 &   4.49E-04 &   3.19 &   6.41E-04 &   3.16 &   1.55E-03 &   3.37 \\ 
&       751 &   5.51E-05 &   2.98 &   7.48E-05 &   3.05 &   2.18E-04 &   2.79 \\ 
&      2958 &   7.53E-06 &   2.90 &   9.94E-06 &   2.95 &   2.81E-05 &   2.99 \\ 
&     11652 &   9.54E-07 &   3.01 &   1.25E-06 &   3.02 &   3.03E-06 &   3.25 \\   \hline
\end{tabular}                                                                   
\label{table:pressure_P3P3P2}                                                      
\end{table}

\subsection{How $\lambda$ effects different errors} \label{Sec:lambda_study}
In this section, we investigate how $\lambda$ affects different errors of the solution. We use the same setup as in Section \ref{Sec:accuracy test} with an unstructured mesh consisting of 1657 $\polP_3$ nodes. In Figure \ref{fig:lambda_error} the maximum divergence error over space and time, i.e. $\|\nabla \cdot \bu_h\|_{L_\infty(\Omega \times (0,T))}$, is plotted against $\lambda$. Moreover, the relative $L_2$ error of the velocity and density at time $T= 10$ is plotted against $\lambda$. Overall, the figure indicates that increasing $\lambda$ decreases the divergence error and also decreases the error in density and velocity up to some threshold. Another important observation is that the Richardson initialization algorithm (Algorithm \ref{rich_init}) requires a larger $\lambda$ than when initial conditions for the time derivatives are used.


\begin{figure}[H]
\begin{subfigure}{.5\linewidth}
\centering 
\includegraphics[scale=0.5]{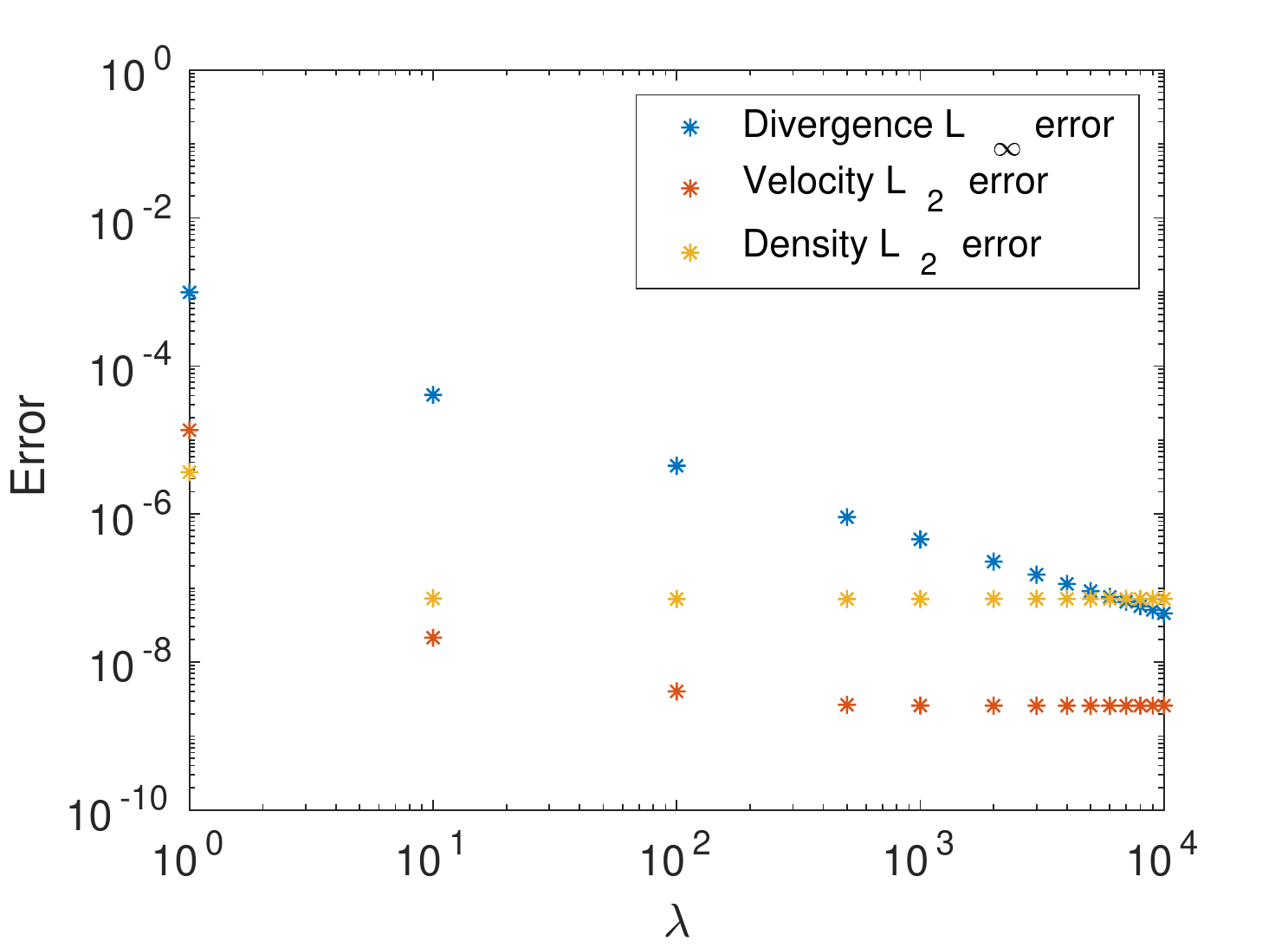}
\caption{}
\end{subfigure}%
\begin{subfigure}{.5\linewidth} 
\centering
\includegraphics[scale=0.5]{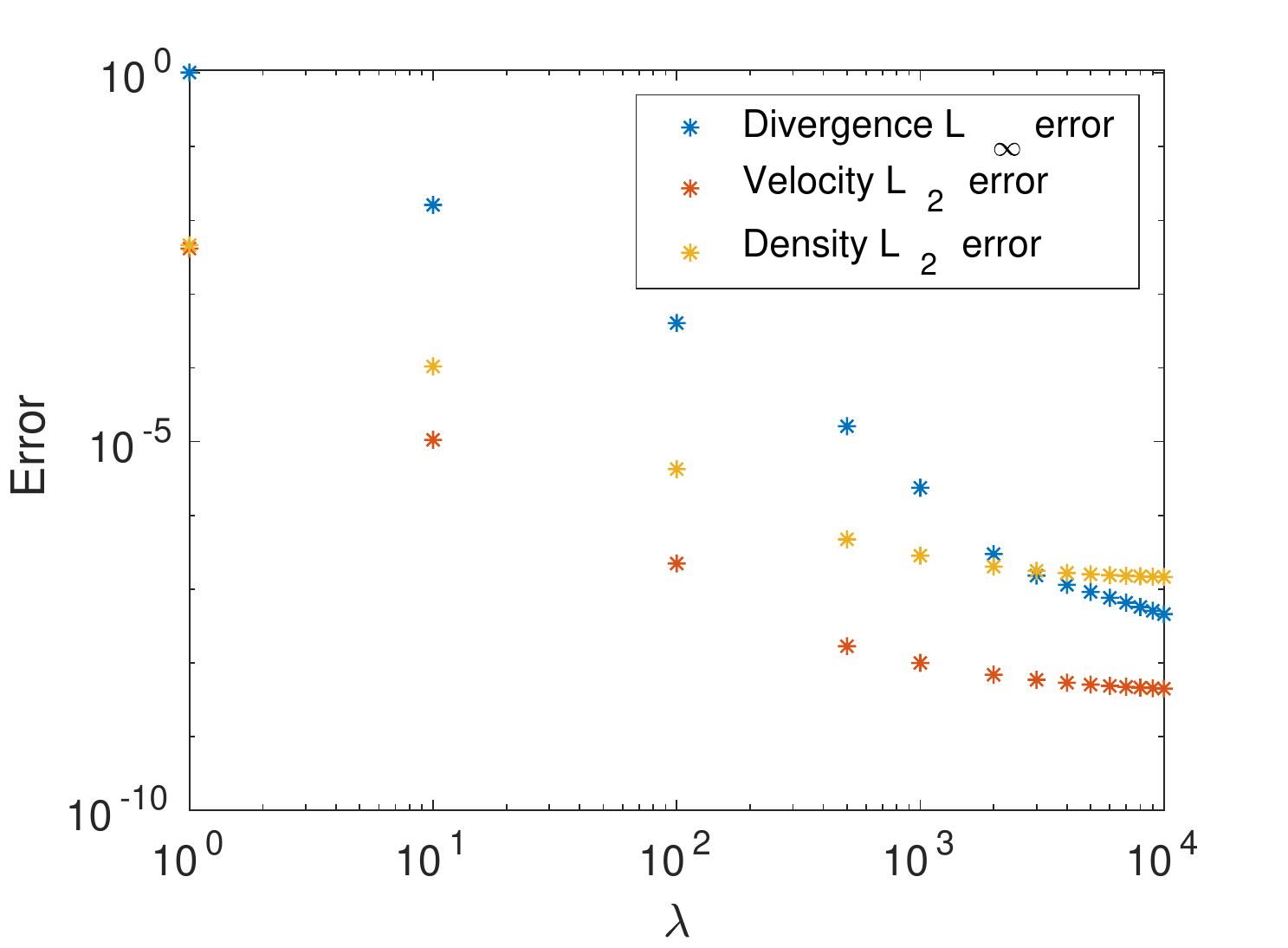}
\caption{}
\end{subfigure}
\caption{Error as a function of $\lambda$ using (a) analytic initial conditions and (b) Richardson initialization.}
\label{fig:lambda_error}
\end{figure}

\subsection{Loss of regularity} \label{Sec:rt_ins} 
In this section, we investigate how the Taylor series time-stepping method handles problems with sharp gradients, i.e. when regularity of the solution is lost. \commentA{The Taylor series method relies on computing time derivative correction terms to achieve high-order accuracy. Needing to compute time derivatives when the solution contains a discontinuity is problematic since the analytic time derivatives are infinitely large. If the discontinuity is regularized using a stabilization technique, computing the time derivatives might still be acceptable. We investigate this in Section \ref{sec:rayleigh} by considering a benchmark problem where regularity is lost.}

\commentA{We emphasize that there exist many stabilization methods which can be applied to the first-order method from Section \ref{Sec:fem_approx}. Some options include entropy viscosity \citep{Guermond_Pasquetti_2011,Nazarov_Larcher_2017,Guermond_Nazarov_2011}, residual viscosity \citep{Nazarov_Hoffman_2013,Stiernstrom2021,Lu2019,Marras2015,Dao2022}, first-order viscosity, Galerkin least-squares \citep{Hughes_1989} and so on. Stabilizing the time derivative updates from Section \ref{Sec:time-stepping-method} is an entirely different matter since the analytic time derivatives are unbounded in the presence of a discontinuity.}

\commentA{We choose to stabilize the method and its time derivative updates using $h$-viscosity, which is one of the most robust spatial stabilizations possible. Our aim is not to achieve an accurate result since the results will be overly diffusive and only first-order accurate in space. Instead, our aim is simply to try to stabilize the method and illustrate that even with this very diffuse stabilization the Taylor series method is not suited for problems of this kind. We impose $h$-viscosity by setting $\sigma_{h,1,2,3} = C_{max} h$ and $\nu_{h,1,2,3} = C_{max} h$ where $C_{max}$ is a constant, see Sections \ref{Sec:high_order_cont_update} and \ref{Sec:high_order_mom_update}.}


\begin{remark}
\commentA{For a hyperbolic problem in one space dimension using $\polP_1$ finite elements in space and explicit Euler in time, it can be shown that scaling the artificial viscosity coefficient as $\sigma_{h} = 0.5 | \bu_0^n | h$ leads to the classical first-order upwind scheme which is a convergent scheme \citep{Lax1954}}.  


\end{remark}

\begin{remark}
\commentA{A more suitable time-stepping method would be a method that allows for loss of regularity. Finding such a scheme in the context of high-order artificial compressibility is an open question. We suspect that the defect correction time-stepping approach from \citep[Sec 5.2]{Guermond_Minev_2015} suffers from a similar problem as the Taylor series method does. A viable alternative would be the modified high-order BDF scheme proposed in \citep[Sec 5.1]{Guermond_Minev_2015}. The downside would be that the scheme would lose its A-stability. One could also consider using a Lagrange-Galerkin approach. The Lagrange-Galerkin method discretizes the total derivative (the convective part of the equations) backward in time along the characteristic curves and has been successfully applied to hyperbolic problems \citep{Colera2021} and incompressible flow \citep{Bermejo2018,Bermejo2016}. Investigating if this time-stepping approach is amenable to high-order artificial compressibility would certainly be an interesting research topic.} 

\end{remark}



\subsubsection{2D Rayleigh-Taylor instability} \label{sec:rayleigh}
As a test problem where regularity is lost, we consider the Rayleigh-Taylor instability in 2D. The Rayleigh-Taylor instability occurs when a fluid accelerates into another fluid with a different density. The classical setup is that a heavier fluid is supported by a lighter fluid in a gravitational field. Any small perturbation to the system forces it out of equilibrium since the initial equilibrium state is unstable. We follow the same setup as \cite{Guermond_Salgado_2009}. The solution is computed in a rectangular domain $\Omega = \{ (x,y) \in (-d/2,d/2) \times (-2d,2d) \}$. The density jump is initially regularized using a hyperbolic tangent function and is given by
\begin{equation}
\rho(x,y,0) = \frac{\rho_1 + \rho_2}{2} + \frac{\rho_1 - \rho_2}{2} \tanh \l( \frac{y- \eta(\bx) }{0.01 d} \r),
\end{equation}
where $\eta(\bx) = - 0.1 d \cos(2\pi x /d)$. Slip boundary conditions are enforced at the boundaries. \commentB{The characteristic velocity scale is set to $\sqrt{dg}$ which gives the Reynolds number $Re = \rho_2 d^{\frac{3}{2}} g^{\frac{1}{2}} / \mu$}. The following parameters are used: $d = 1$, $\rho_1 = 3$, $\rho_2 = 1$, \commentB{$\bef = (0, -\rho g)$, $g = 1$,} $\lambda = 5000$, $s_{max} = 1.1$, $s_{min} = 0.75$, CFL = 0.5, \commentB{$\sigma_{h,1,2,3} = 0.5 h$, $\nu_{h,1,2,3} = 0.5 h$}. A sufficiently large value of $\lambda$ is set to ensure that divergence errors are negligible, see Section \ref{Sec:lambda_study} for more details. We use a structured mesh containing 45676 $\polP_3$ nodes, which later is uniformly refined two times.

In Figures \ref{fig:rt_n25}-\ref{fig:rt_n100} we present the time evolution of the computed density field for $Re = 5000$, respectively, in the time-scale of Tryggvason ($t = \sqrt{2} t_{Tryg}$). Overall, the results are in agreement with the result obtained by \cite{Guermond_Salgado_2009} where a second-order accurate Taylor-Hood finite element method was used. Since $h$-viscosity was used as stabilization the results presented here are very diffused. As a comparison we present results with the first-order algorithm from Section \ref{Sec:fem_approx} in Figure \ref{fig:rt_n100_order1}. Overall, the results of the first-order algorithm is completely identical to the Taylor series method.

In Figures \ref{fig:rt_rhotl_n25}-\ref{fig:rt_rhotl_n100} the computed third time derivative of the density is presented. We observe that the magnitude of the solution is increased when the grid is refined. The reason why the time derivative increases is that the gradient of the solution grows upon grid refinement. We infer that the time derivative will continue to grow without bound as the grid is refined. We, therefore, conclude that the Taylor series method is unsuitable for problems where the regularity of the solution is lost. \commentA{A more accurate and sharp stabilization procedure will exaggerate this issue.}

\newcommand \xWidth{.072}
\newcommand \xWidthcolorbar{\fpeval{1.4125*\xWidth}}
\newcommand \xWidthcolorbarr{\fpeval{0.75*\xWidth}}

\begin{figure}[H] \centering 
    \includegraphics[width=\xWidth \textwidth]{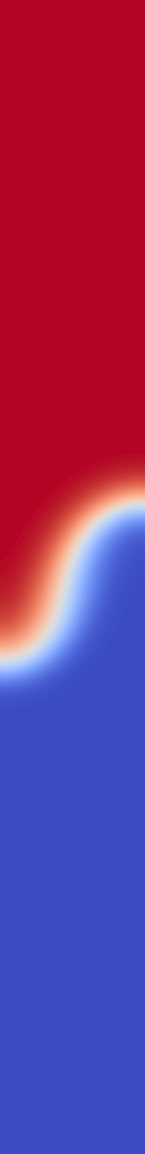}
    \includegraphics[width=\xWidth \textwidth]{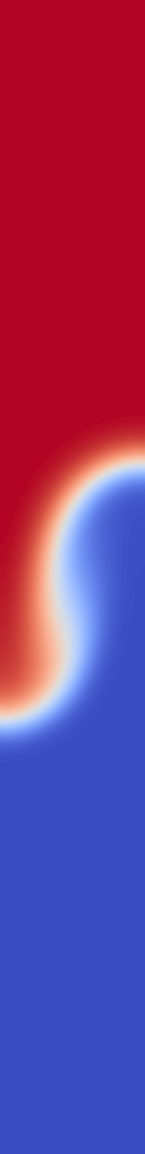}
    \includegraphics[width=\xWidth \textwidth]{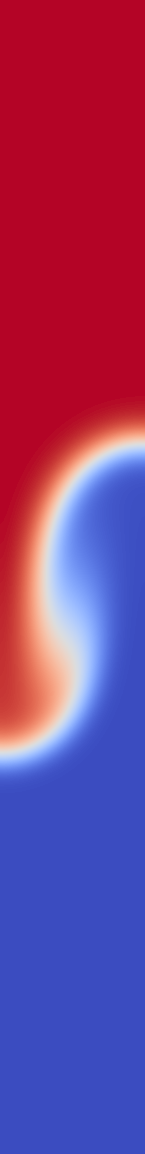}
    \includegraphics[width=\xWidth \textwidth]{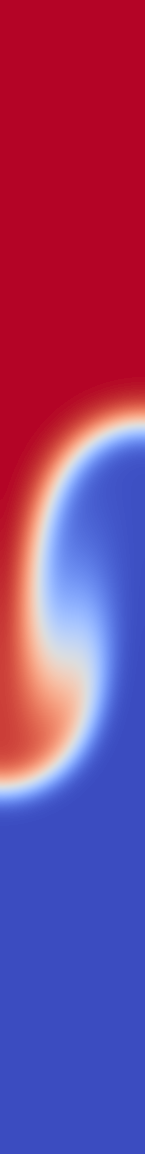}
    \includegraphics[width=\xWidth \textwidth]{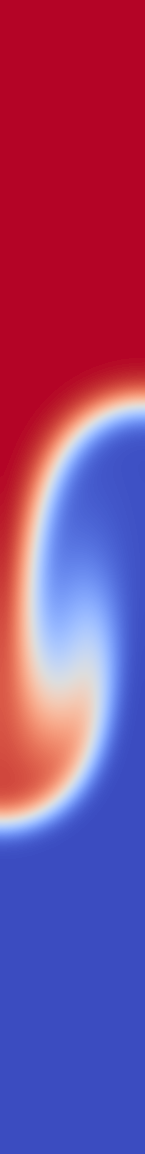}
    \includegraphics[width=\xWidth \textwidth]{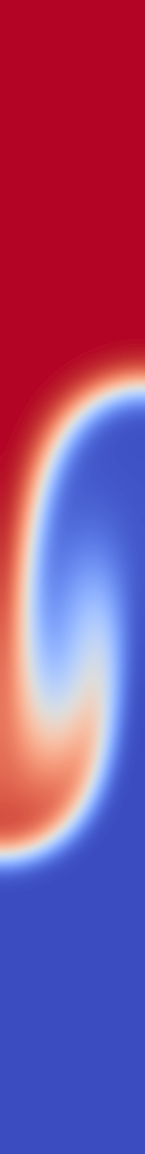}
    \includegraphics[width=\xWidthcolorbarr \textwidth]{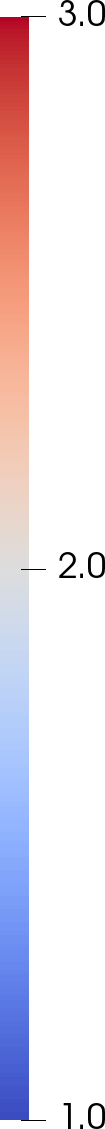}
  \caption{$Re = 5000$. Time evolution of $\rho$ using 45676 $\polP_3$ nodes at times 1, 1.5, 1.75, 2, 2.25 and 2.5.} \label{fig:rt_n25}
\end{figure}

\begin{figure}[H] \centering 
    \includegraphics[width=\xWidth \textwidth]{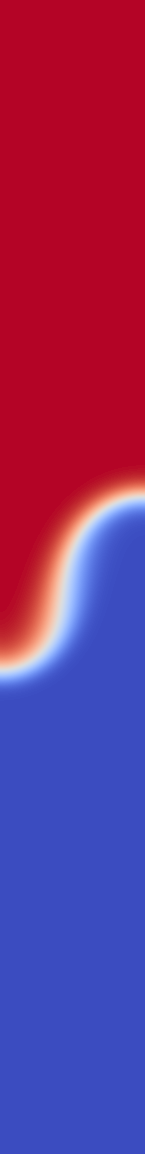}
    \includegraphics[width=\xWidth \textwidth]{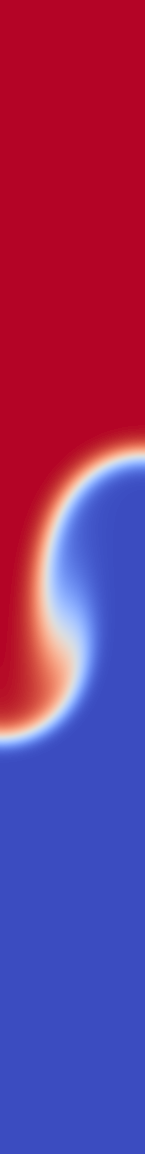}
    \includegraphics[width=\xWidth \textwidth]{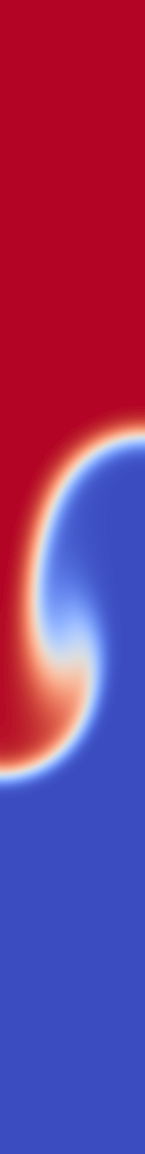}
    \includegraphics[width=\xWidth \textwidth]{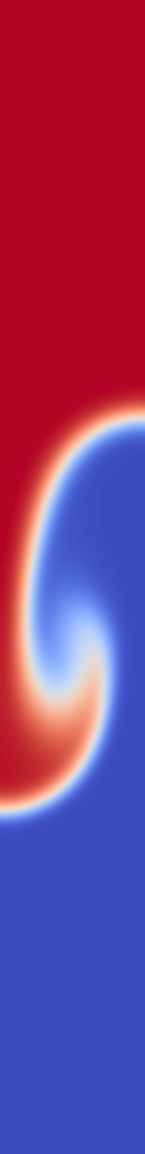}
    \includegraphics[width=\xWidth \textwidth]{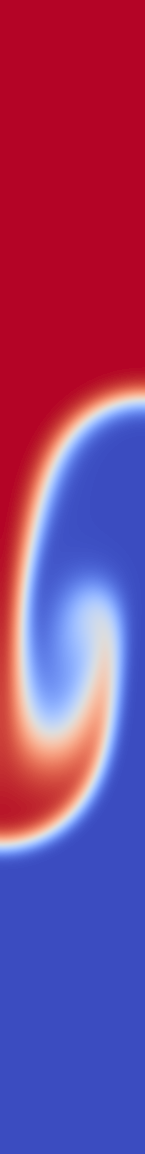}
    \includegraphics[width=\xWidth \textwidth]{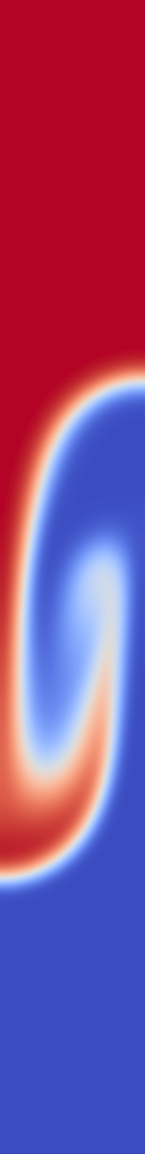}
    \includegraphics[width=\xWidthcolorbarr \textwidth]{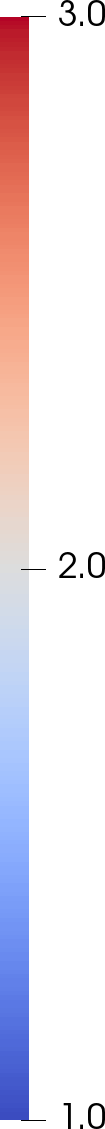}
  \caption{$Re = 5000$. Time evolution of $\rho$ using 181351 $\polP_3$ nodes at times 1, 1.5, 1.75, 2, 2.25 and 2.5.} \label{fig:rt_n50}
\end{figure}

\begin{figure}[H] \centering 
    \includegraphics[width=\xWidth \textwidth]{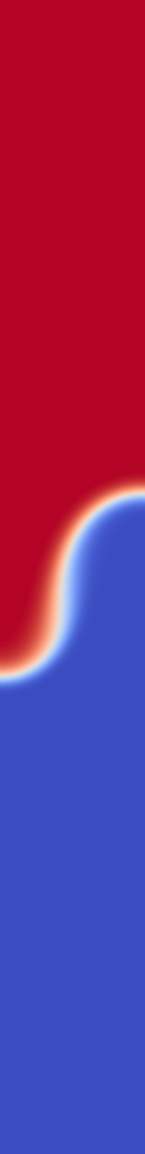}
    \includegraphics[width=\xWidth \textwidth]{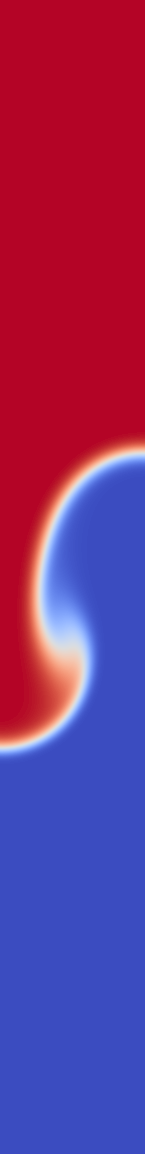}
    \includegraphics[width=\xWidth \textwidth]{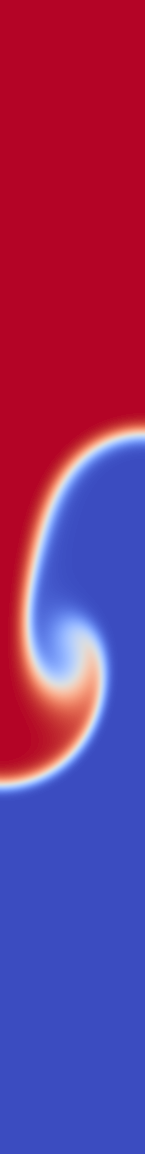}
    \includegraphics[width=\xWidth \textwidth]{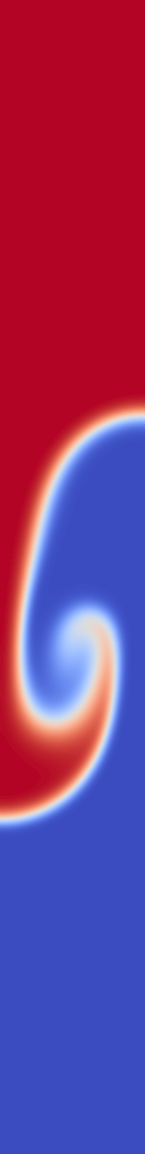}
    \includegraphics[width=\xWidth \textwidth]{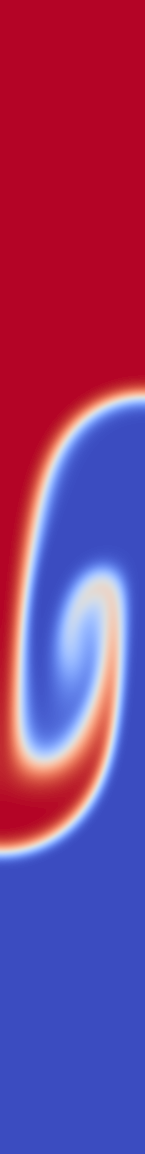}
    \includegraphics[width=\xWidth \textwidth]{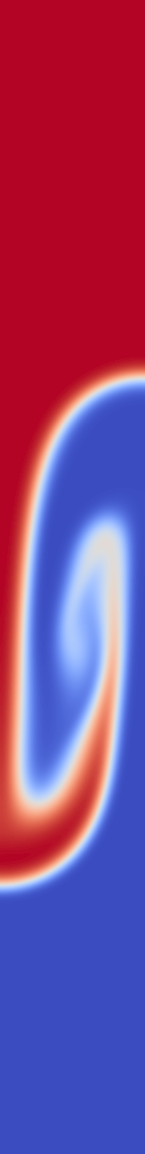}
    \includegraphics[width=\xWidthcolorbarr \textwidth]{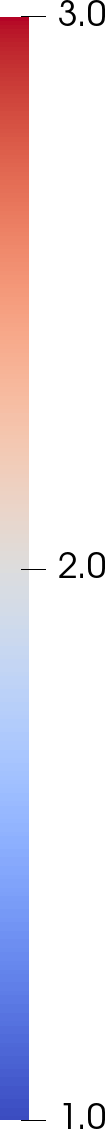}
  \caption{$Re = 5000$. Time evolution of $\rho$ using 722701 $\polP_3$ nodes field at times 1, 1.5, 1.75, 2, 2.25 and 2.5.} \label{fig:rt_n100}
\end{figure}

\begin{figure}[H] \centering 
    \includegraphics[width=\xWidth \textwidth]{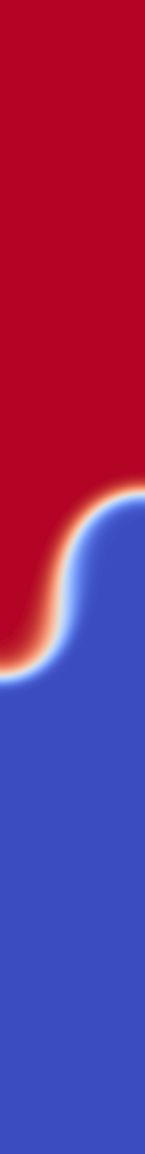}
    \includegraphics[width=\xWidth \textwidth]{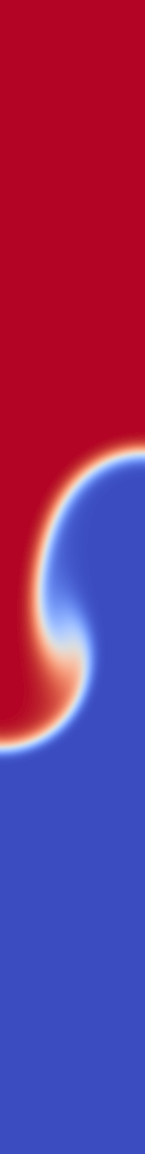}
    \includegraphics[width=\xWidth \textwidth]{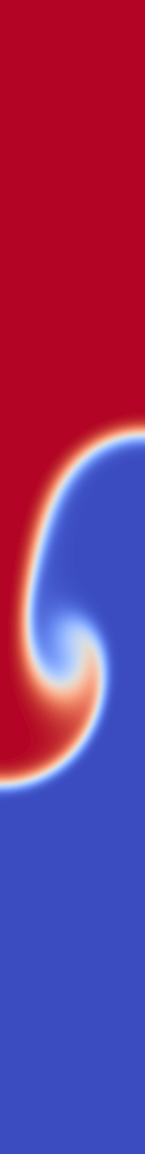}
    \includegraphics[width=\xWidth \textwidth]{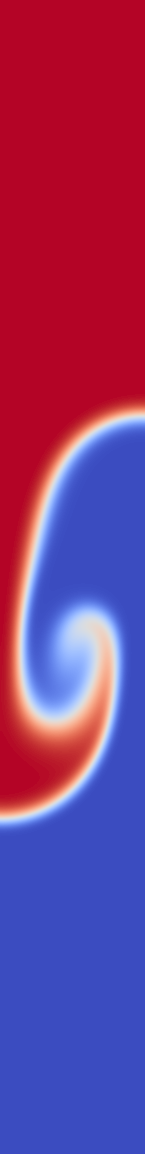}
    \includegraphics[width=\xWidth \textwidth]{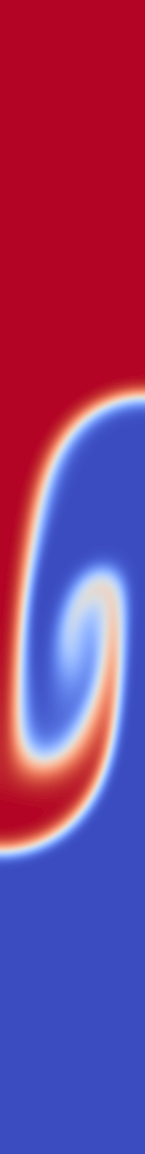}
    \includegraphics[width=\xWidth \textwidth]{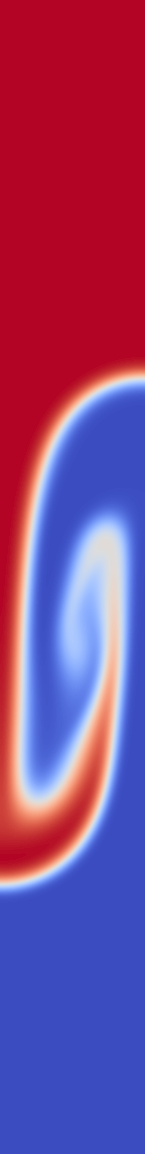}
    \includegraphics[width=\xWidthcolorbarr \textwidth]{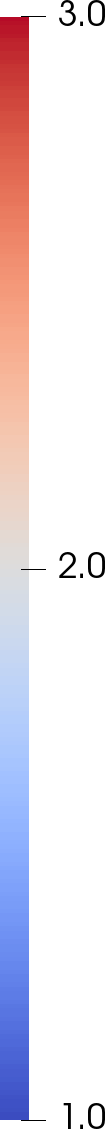}
  \caption{$Re = 5000$. Time evolution of $\rho$ using 722701 $\polP_3$ nodes field at times 1, 1.5, 1.75, 2, 2.25 and 2.5. Computed using first-order method from Section \ref{Sec:fem_approx}.} \label{fig:rt_n100_order1}
\end{figure}

\begin{figure}[H] \centering 
    \includegraphics[width=\xWidth \textwidth]{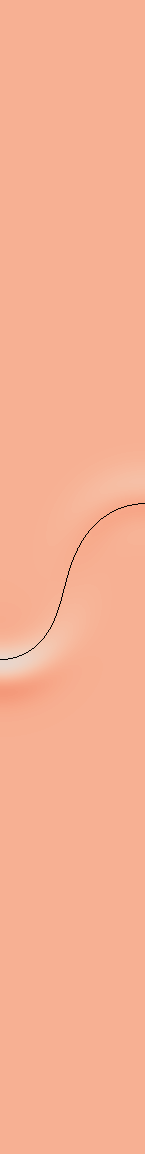}
    \includegraphics[width=\xWidth \textwidth]{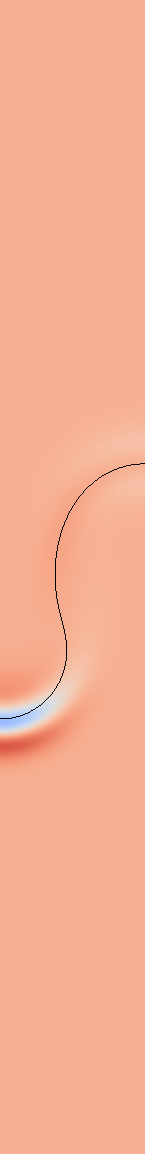}
    \includegraphics[width=\xWidth \textwidth]{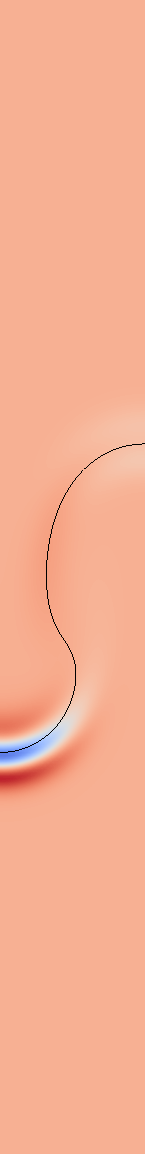}
    \includegraphics[width=\xWidth \textwidth]{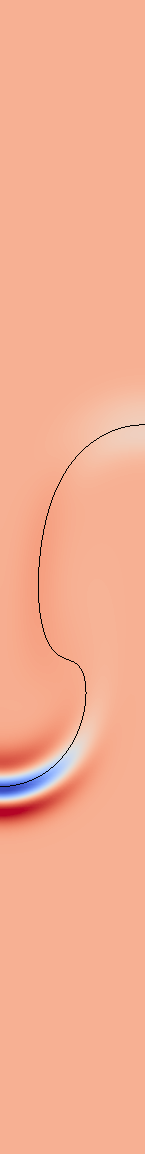}
    \includegraphics[width=\xWidth \textwidth]{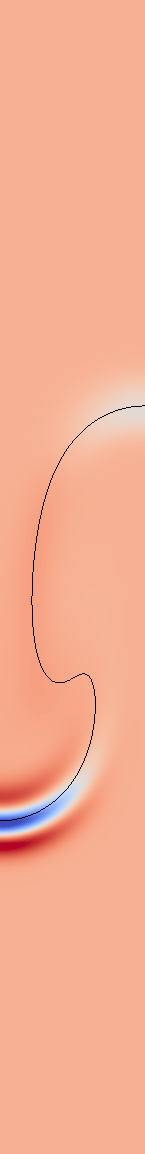}
    \includegraphics[width=\xWidth \textwidth]{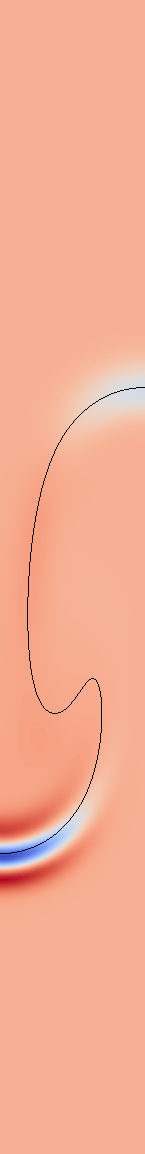}
    \includegraphics[width=\xWidthcolorbar \textwidth]{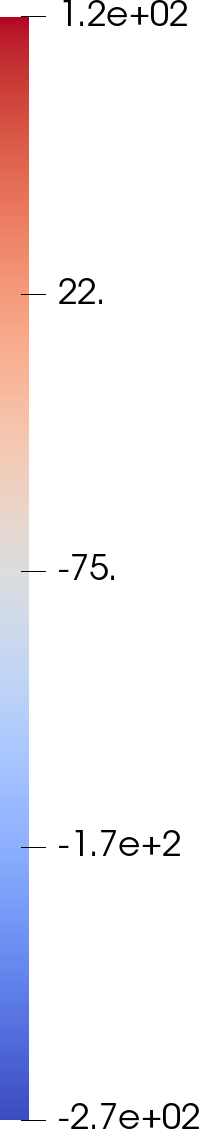}
  \caption{$Re = 5000$. Time evolution of $\rho_{ttt}$ using 45676 $\polP_3$ nodes at times 1, 1.5, 1.75, 2, 2.25 and 2.5. The black line corresponds to the contour line for $\rho = 2$ in Figure \ref{fig:rt_n25}.} \label{fig:rt_rhotl_n25}
\end{figure}

\begin{figure}[H] \centering 
    \includegraphics[width=\xWidth \textwidth]{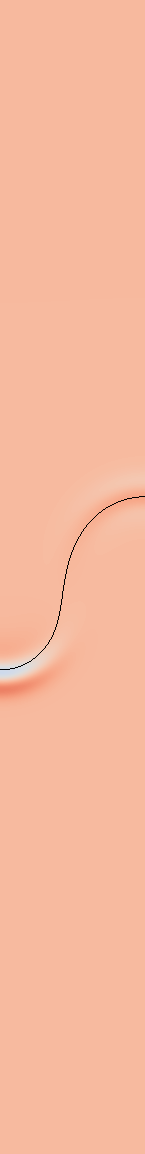}
    \includegraphics[width=\xWidth \textwidth]{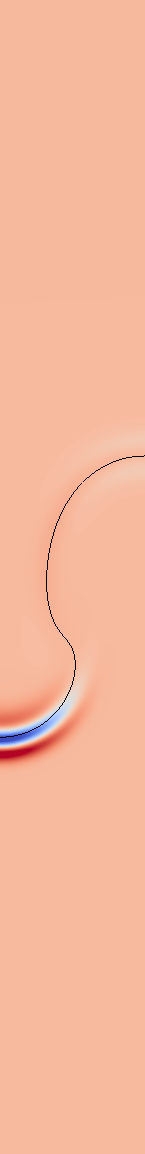}
    \includegraphics[width=\xWidth \textwidth]{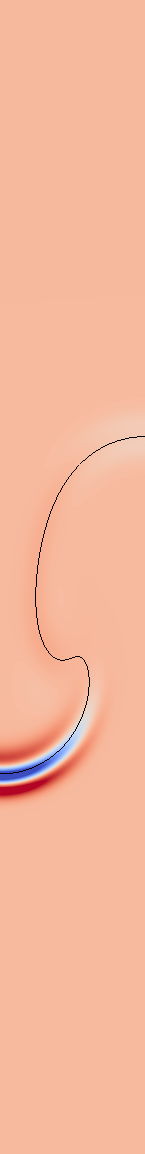}
    \includegraphics[width=\xWidth \textwidth]{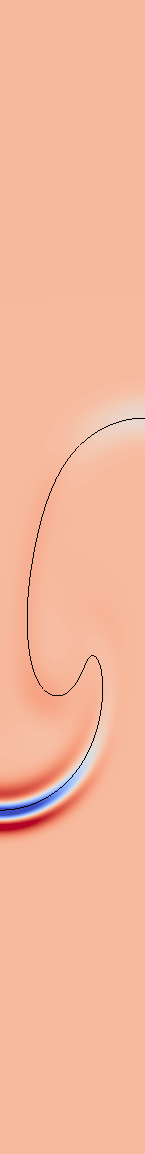}
    \includegraphics[width=\xWidth \textwidth]{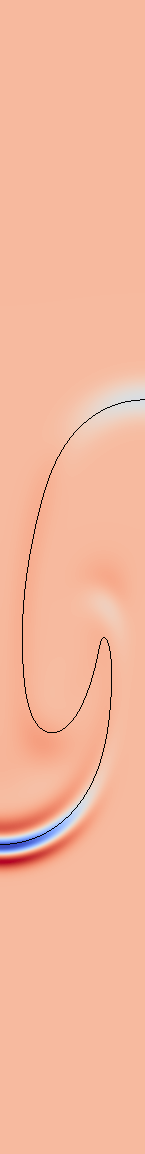}
    \includegraphics[width=\xWidth \textwidth]{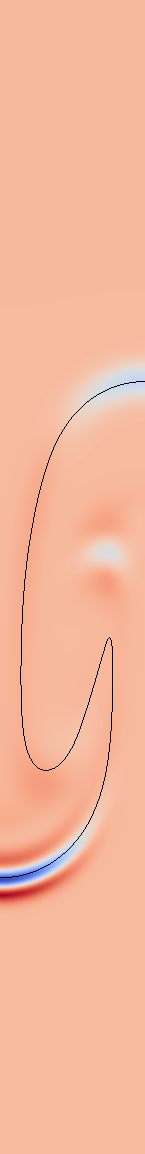}
    \includegraphics[width=\xWidthcolorbar \textwidth]{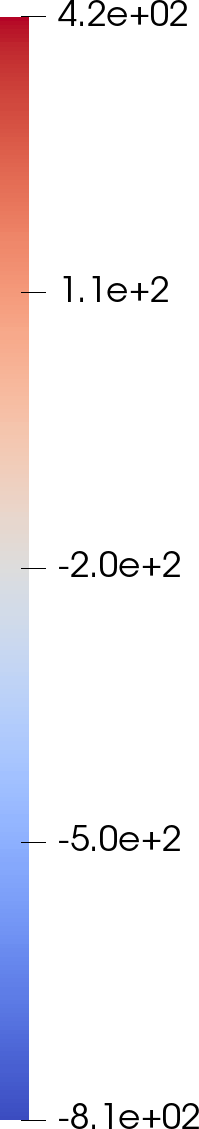}
  \caption{$Re = 5000$. Time evolution of $\rho_{ttt}$ using 181351 $\polP_3$ nodes at times 1, 1.5, 1.75, 2, 2.25 and 2.5. The black line corresponds to the contour line for $\rho = 2$ in Figure \ref{fig:rt_n50}.} \label{fig:rt_rhotl_n50}
\end{figure}

\begin{figure}[H] \centering 
    \includegraphics[width=\xWidth \textwidth]{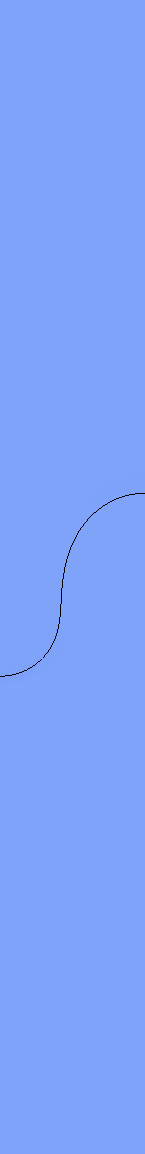}
    \includegraphics[width=\xWidth \textwidth]{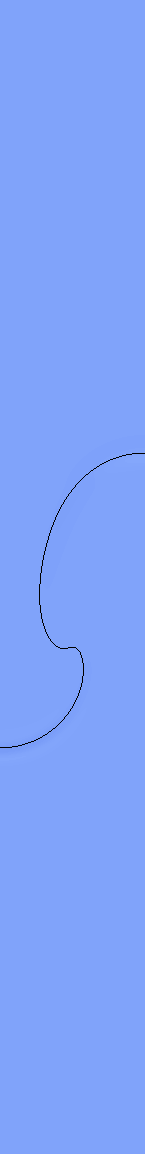}
    \includegraphics[width=\xWidth \textwidth]{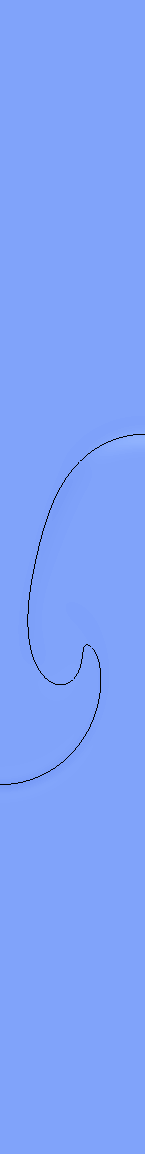}
    \includegraphics[width=\xWidth \textwidth]{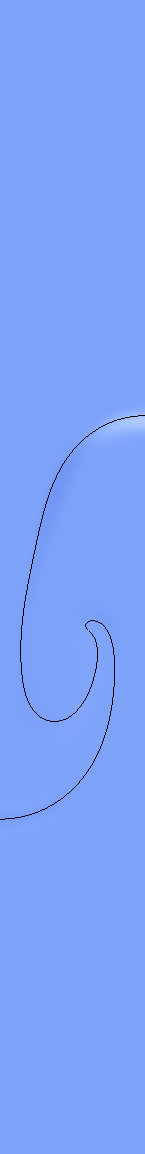}
    \includegraphics[width=\xWidth \textwidth]{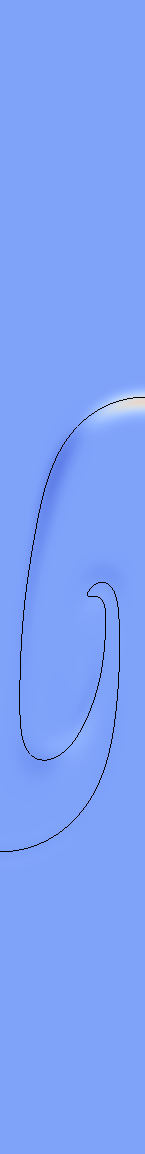}
    \includegraphics[width=\xWidth \textwidth]{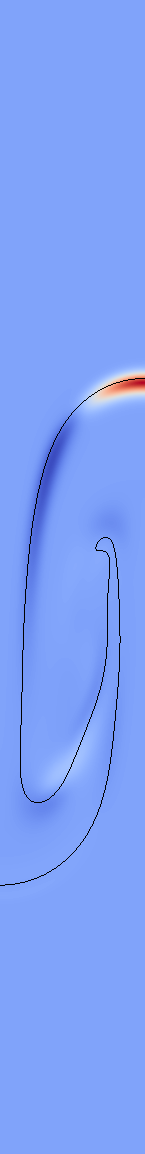}
    \includegraphics[width=\xWidthcolorbar \textwidth]{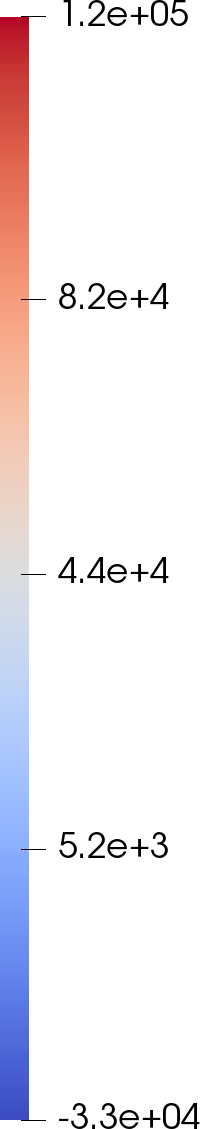}
  \caption{$Re = 5000$. Time evolution of $\rho_{ttt}$ using 722701 $\polP_3$ nodes at times 1, 1.5, 1.75, 2, 2.25 and 2.5. The black line corresponds to the contour line for $\rho = 2$ in Figure \ref{fig:rt_n100}.} \label{fig:rt_rhotl_n100}
\end{figure}



\begin{remark}
\commentA{Since $| \bu_0 | \approx 1$ for this problem, setting $\sigma_{h,1,2,3} = \nu_{h,1,2,3} = 0.5 h$ will lead to slightly more diffusive results than than $\sigma_{h,1,2,3} = \nu_{h,1,2,3} = 0.5 | \bu_0^n | h$. Qualitatively, the results are similar in the sense that the computed time derivatives grow without bound upon grid refinement.}
\end{remark}

\section{Conclusion and further work} \label{Sec:conclusion}A fourth-order accurate finite element method for variable density incompressible flow is proposed. The method uses artificial compressibility to impose the divergence-free constraint following the ideas presented by \cite{Guermond_Minev_2019} which we have extended to variable density. In space $\polP_3 \polP_3 \polP_2$ continuous finite elements are used and in time the Taylor series time-stepping method is used. If initial conditions for the time derivatives are available, the proposed method is high-order accurate using the penalty parameter $\lambda = 1$, which makes the condition number of the resulting linear systems suitable for large-scale applications. If these initial conditions are not available, a Richardson initialization procedure can be used which requires a larger $\lambda$.

We also illustrate that the Taylor series time-stepping method is not suitable when the regularity of the solution is lost by solving the Rayleigh-Taylor instability in 2D. The reason for this is that when the grid is refined, the time derivatives increase in magnitude without bound. Our current work is aimed at overcoming this problem, and the development of a reliable high-order finite element approximation for variable density flow will be reported in our future publication.

%
%

%

\section*{Acknowledgments}
\commentB{We thank the anonymous reviewers whose comments helped improve the quality of this manuscript. The computations were enabled by resources in project SNIC 2022/22-428 provided by the Swedish National Infrastructure for Computing (SNIC) at UPPMAX, partially funded by the Swedish Research Council through grant agreement no. 2018-05973. The first author was partly supported by the Center for Interdisciplinary Mathematics, Uppsala University.} 

\section{Appendix} \label{Sec:appendix}
In this section we present $L_2$ estimates for the first-order scheme from Section \ref{Sec:artificial_compressibility}. We denote $\| \cdot \|$ as the $L_2$-norm. To make the analysis simpler we assume that $\bu = 0 |_{\partial \Omega}$, which will make many boundary terms resulting from integration by parts disappear. Similarly, we set $\bef = 0$.

\subsection{First order scheme, density update}
We take the density update \eqref{eq:first_order1} and and discretize the flux skew-symmetrically. That gives us the following scheme
\begin{equation} \label{eq:gs_density}
\frac{\rho^{n+1} - \rho^n}{\tau} + \nabla \cdot \l( \rho^{n+1} \bu^n \r) - \frac{\rho^{n+1}}{2} \nabla \cdot \bu^n = 0.
\end{equation}
We now present an $L_2$ estimate for this scheme. The result and proof are standard and can be found elsewhere (for example \citep{Guermond_Salgado_2009}), but we recap it below for completeness.

\begin{proposition}
The density update \eqref{eq:gs_density} satisfies the following $L_2$-estimate
\begin{equation}
\begin{split}
\| \rho^{n+1}\|^2 - \| \rho^n \|^2 + \| \rho^{n+1} - \rho^n \|^2 = 0, \quad \forall n \geq 1.
\end{split}
\end{equation}
\end{proposition}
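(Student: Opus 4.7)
The plan is to derive the estimate by testing the discrete density equation with $2\tau\rho^{n+1}$ and integrating over $\Omega$, then handling the time-difference term with the standard polarization identity and showing that the two convective contributions cancel thanks to the skew-symmetric form of the flux. Concretely, I would multiply \eqref{eq:gs_density} by $2\tau\rho^{n+1}$ and integrate:
\begin{equation*}
2\!\int_\Omega (\rho^{n+1}-\rho^n)\rho^{n+1}\,d\bx
\;+\;2\tau\!\int_\Omega \rho^{n+1}\nabla\!\cdot\!(\rho^{n+1}\bu^n)\,d\bx
\;-\;\tau\!\int_\Omega (\rho^{n+1})^2\,\nabla\!\cdot\!\bu^n\,d\bx \;=\;0.
\end{equation*}

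For the first term, I would apply the identity $2a(a-b)=a^2-b^2+(a-b)^2$ pointwise with $a=\rho^{n+1}$ and $b=\rho^n$, which directly produces the three desired contributions $\|\rho^{n+1}\|^2-\|\rho^n\|^2+\|\rho^{n+1}-\rho^n\|^2$. This step is routine.

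The heart of the argument is showing that the remaining two terms cancel, which is precisely where the skew-symmetric discretization is designed to pay off. Using the product rule I rewrite $\nabla\!\cdot\!(\rho^{n+1}\bu^n)=\bu^n\!\cdot\!\nabla\rho^{n+1}+\rho^{n+1}\nabla\!\cdot\!\bu^n$, so that
\begin{equation*}
\int_\Omega \rho^{n+1}\nabla\!\cdot\!(\rho^{n+1}\bu^n)\,d\bx
\;=\;\int_\Omega \rho^{n+1}\bu^n\!\cdot\!\nabla\rho^{n+1}\,d\bx
\;+\;\int_\Omega (\rho^{n+1})^2\,\nabla\!\cdot\!\bu^n\,d\bx.
\end{equation*}
Then, since $\rho^{n+1}\nabla\rho^{n+1}=\tfrac{1}{2}\nabla(\rho^{n+1})^2$ and $\bu^n\!\cdot\!\bn=0$ on $\partial\Omega$ (as assumed in the appendix preamble), integration by parts gives
\begin{equation*}
\int_\Omega \rho^{n+1}\bu^n\!\cdot\!\nabla\rho^{n+1}\,d\bx
\;=\;\tfrac{1}{2}\!\int_\Omega \bu^n\!\cdot\!\nabla(\rho^{n+1})^2\,d\bx
\;=\;-\tfrac{1}{2}\!\int_\Omega (\rho^{n+1})^2\,\nabla\!\cdot\!\bu^n\,d\bx.
\end{equation*}
Substituting back yields $\int_\Omega \rho^{n+1}\nabla\!\cdot\!(\rho^{n+1}\bu^n)\,d\bx=\tfrac{1}{2}\!\int_\Omega (\rho^{n+1})^2\,\nabla\!\cdot\!\bu^n\,d\bx$, which exactly matches the third term in the tested equation (up to the factor of two), so these two contributions cancel identically.

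Combining the polarization identity with this cancellation yields the claimed estimate. The main obstacle, such as it is, is keeping the signs and the factor of $\tfrac12$ consistent between the two manipulations of the convective term; the boundary assumption $\bu\!=\!0$ on $\partial\Omega$ is what makes the integration-by-parts boundary term vanish and is essential for the cancellation to be exact.
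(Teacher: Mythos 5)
Your proposal is correct and follows essentially the same route as the paper: multiply \eqref{eq:gs_density} by $2\tau\rho^{n+1}$, apply the identity $2a(a-b)=a^2-b^2+(a-b)^2$ to the time-difference term, and cancel the two convective contributions via the product rule and integration by parts using $\bu=0$ on $\partial\Omega$. The only difference is cosmetic bookkeeping in how the product rule and integration by parts are ordered when showing the cancellation.
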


\begin{proof}
Multiplying \eqref{eq:gs_density} with $2 \tau \rho^{n+1}$, integrating over $\Omega$ and using $2a (a- b) = a^2 - b^2 + (a-b)^2$ yields

\begin{equation}
\begin{split}
& \| \rho^{n+1}\|^2 - \| \rho^n \|^2 + \| \rho^{n+1} - \rho^n \|^2 
+ & 2 \tau \l(\nabla \cdot \l( \rho^{n+1} \bu^n \r) , \rho^{n+1} \r) - \tau \l( \l( \rho^{n+1} \r)^2, \nabla \cdot \bu^n \r) = 0.
\end{split}
\end{equation}

For the advection term we have
\begin{equation}
\begin{split}
& 2 \l( \nabla \cdot \l( \rho^{n+1} \bu^n \r) , \rho^{n+1} \r) - \l(   \rho^{n+1} \nabla \cdot \bu^n , \rho^{n+1} \r)  \\
=& \l( \nabla \cdot \l( \rho^{n+1} \bu^n \r) , \rho^{n+1} \r) + \l( \nabla \cdot \l( \rho^{n+1} \bu^n \r) , \rho^{n+1} \r) - \l(   \rho^{n+1} \nabla \cdot \bu^n , \rho^{n+1} \r)   \\
=& \l( \nabla \cdot \l( \rho^{n+1} \bu^n \r) , \rho^{n+1} \r)  + \l(  \rho^{n+1} \nabla \cdot \bu^{n}  + \bu^n \cdot \nabla \rho^{n+1} , \rho^{n+1} \r) - \l(   \rho^{n+1} \nabla \cdot \bu^n , \rho^{n+1} \r)   \\
=& \l( \nabla \cdot \l( \rho^{n+1} \bu^n \r) , \rho^{n+1} \r) + \l(   \bu^n \cdot \nabla \rho^{n+1} , \rho^{n+1} \r) = 0,    \\
\end{split}
\end{equation}
where integration by parts was used in the last equality. We then have the desired estimate.





\end{proof}

\subsection{First order scheme, velocity form}



In this section, we present $L_2$ estimates of the momentum update. We take the first-order momentum update \eqref{eq:first_order2}-\eqref{eq:first_order3}, discretize the advection term skew-symmetrically and semi-implicitly and shift the density on the time derivative to $\rho^n$. To the best of the \commentB{authors' knowledge,} these modifications are necessary to obtain an $L_2$-estimate. The shifted density will unfortunately make high-order extensions (like those presented in this work) harder since it will limit the order of accuracy in time to first-order accuracy. There have been some attempts at overcoming this issue for BDF2 based time-stepping by \cite{Guermond_Salgado_2011}, but it remains an open question. The scheme that we consider is the following
\begin{equation} \label{eq:gs_mom}
\begin{split}
& \rho^n \frac{\bu^{n+1} - \bu^{n}}{\tau} + \rho^{n+1} \l( \bu^n \cdot \nabla \r) \bu^{n+1}  - \mu \commentB{ \Delta } \bu^{n+1} + \frac{\rho^{n+1}}{4} \l( \nabla \cdot \bu^n \r)  \bu^{n+1} + \nabla p^{n+1} = 0,
\end{split}
\end{equation}
\begin{equation} \label{eq:gs_press}
p^{n+1} = p^n - \lambda \nabla \cdot \bu^{n+1}.
\end{equation}

We follow a very similar proof technique as \cite{Guermond_Salgado_2009} did for variable density and as \cite{Chen_Layton_2019} did for artificial compressibility.

\begin{proposition}
Let $\sigma^n := \sqrt{\rho^n}$. The scheme for density \eqref{eq:gs_density}, velocity \eqref{eq:gs_mom} and pressure \eqref{eq:gs_press} satisfies the following $L_2$-estimate
\begin{equation} 
\begin{split}
\| \sigma^{n+1} \bu^{n+1} \|^2 - \| \sigma^n \bu^{n} \|^2 + \| \sigma^n \l( \bu^{n+1} - \bu^n \r) \|^2 + 2 \mu \tau \| \nabla \bu^{n+1} \|^2 \\ +  \frac{\tau }{\lambda} \l(  \| p^{n+1} \|^2 - \| p^n \|^2 + \| p^{n+1} - p^n \|^2 \r)   = 0,
\end{split} \quad \forall n \geq 1.
\end{equation}
\end{proposition}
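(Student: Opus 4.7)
The plan is to test the momentum update \eqref{eq:gs_mom} against $2\tau\bu^{n+1}$, integrate over $\Omega$, and then use the density update \eqref{eq:gs_density} to bridge the mismatch between the time-derivative density $\rho^n$ and the desired kinetic energy norm $\|\sigma^{n+1}\bu^{n+1}\|^2$. Throughout, the pointwise identity $2a(a-b) = a^2 - b^2 + (a-b)^2$ will produce both the telescoping differences and the numerical-dissipation terms $\|\sigma^n(\bu^{n+1}-\bu^n)\|^2$ and $\|p^{n+1}-p^n\|^2$.

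The routine contributions from testing \eqref{eq:gs_mom} are as follows. The time-derivative term yields $2(\rho^n(\bu^{n+1}-\bu^n),\bu^{n+1}) = \|\sigma^n\bu^{n+1}\|^2 - \|\sigma^n\bu^n\|^2 + \|\sigma^n(\bu^{n+1}-\bu^n)\|^2$, which gives $\|\sigma^n\bu^{n+1}\|^2$ rather than the targeted $\|\sigma^{n+1}\bu^{n+1}\|^2$; the gap is precisely $(\rho^{n+1}-\rho^n,|\bu^{n+1}|^2)$. The viscous term contributes $2\mu\tau\|\nabla\bu^{n+1}\|^2$ after integration by parts using $\bu = 0$ on $\partial\Omega$. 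For the pressure, substituting $\nabla\cdot\bu^{n+1} = (p^n-p^{n+1})/\lambda$ from \eqref{eq:gs_press} into $-2\tau(p^{n+1},\nabla\cdot\bu^{n+1})$ and applying the same algebraic identity with $a=p^{n+1}$, $b=p^n$ produces the pressure block $\frac{\tau}{\lambda}(\|p^{n+1}\|^2 - \|p^n\|^2 + \|p^{n+1}-p^n\|^2)$.

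The heart of the proof is closing the density gap. My plan is to rewrite $(\rho^{n+1}-\rho^n,|\bu^{n+1}|^2)$ using \eqref{eq:gs_density} to get
\[(\rho^{n+1}-\rho^n,|\bu^{n+1}|^2) = \tau(-\nabla\cdot(\rho^{n+1}\bu^n),|\bu^{n+1}|^2) + \tfrac{\tau}{2}(\rho^{n+1}\nabla\cdot\bu^n,|\bu^{n+1}|^2),\]
then integrate by parts on the first term and use the pointwise identity $\bu^n\cdot\nabla|\bu^{n+1}|^2 = 2((\bu^n\cdot\nabla)\bu^{n+1})\cdot\bu^{n+1}$ to obtain
\[(\rho^{n+1}-\rho^n,|\bu^{n+1}|^2) = 2\tau(\rho^{n+1}(\bu^n\cdot\nabla)\bu^{n+1},\bu^{n+1}) + \tfrac{\tau}{2}(\rho^{n+1}\nabla\cdot\bu^n,|\bu^{n+1}|^2).\]
The right-hand side should match exactly the convective and grad-div-like contributions from testing \eqref{eq:gs_mom}, so these terms combine with $\|\sigma^n\bu^{n+1}\|^2$ to produce $\|\sigma^{n+1}\bu^{n+1}\|^2$. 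This follows the template of \cite{Guermond_Salgado_2009} and \cite{Chen_Layton_2019}, as the authors indicate.

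The main technical point (not conceptually deep, but requiring care) is confirming that the coefficient $\tfrac{1}{4}$ in front of $\rho^{n+1}(\nabla\cdot\bu^n)\bu^{n+1}$ in \eqref{eq:gs_mom} is exactly what makes this cancellation go through: the factor $2$ coming from $\nabla|\bu^{n+1}|^2$ and the factor $\tfrac{1}{2}$ from the skew-symmetric split in \eqref{eq:gs_density} combine to force this particular choice. Once that bookkeeping is verified, summing all four contributions produces the claimed identity.
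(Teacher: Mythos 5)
Your proposal is correct and follows essentially the same route as the paper: testing \eqref{eq:gs_mom} against $2\tau\bu^{n+1}$, using \eqref{eq:gs_density} tested against $\tau|\bu^{n+1}|^2$ to convert $\|\sigma^n\bu^{n+1}\|^2$ into $\|\sigma^{n+1}\bu^{n+1}\|^2$ while cancelling the convective and $\tfrac14\rho^{n+1}(\nabla\cdot\bu^n)\bu^{n+1}$ contributions, and eliminating $\nabla\cdot\bu^{n+1}$ via \eqref{eq:gs_press}. The only cosmetic difference is that you substitute $\nabla\cdot\bu^{n+1}=(p^n-p^{n+1})/\lambda$ directly into the pressure term, whereas the paper first adds the pressure update tested with $2\tau p^{n+1}$ and then simplifies the residual $(1-\lambda)$ term; both yield the same $\tfrac{\tau}{\lambda}$ block.
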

\begin{proof}
We multiply \eqref{eq:gs_mom} with $2 \tau \bu^{n+1}$ and integrate over the domain. Integration by parts is performed on the diffusion term and the identity $2a (a- b) = a^2 - b^2 + (a-b)^2$ is applied \commentB{to} the time derivative term leading to
\begin{equation} \label{eq:mom_eq_mult2tau}
\begin{split}
\| \sigma^n \bu^{n+1} \|^2 - \| \sigma^n \bu^n \|^2 + \| \sigma^n \l( \bu^{n+1} - \bu^n \r) \|^2 + 2 \mu \tau \| \nabla \bu^{n+1} \|^2 \\ + \tau \int \rho^{n+1} \bu^{n} \cdot \nabla |\bu^{n+1} |^2
 + \tau/2 \int \rho^{n+1} ( \nabla \cdot \bu^n ) | \bu^{n+1} |^2  + 2 \tau (\nabla p^{n+1}, \bu^{n+1} ) = 0.
\end{split}
\end{equation}
Next, we take the density update \eqref{eq:gs_density} and multiply with $ \tau |\bu^{n+1} |^2$. Integrating over the domain and integrating by parts then gives
\begin{equation} \label{eq:dens_modified}
\begin{split}
\| \sigma^{n+1} \bu^{n+1} \|^2 - \| \sigma^n \bu^{n+1} \|^2  - \tau \int \rho^{n+1} \bu^n \cdot \nabla | \bu^{n+1} |^2 - \tau/2 \int \rho^{n+1} ( \nabla \cdot \bu^n ) |\bu^{n+1} |^2 = 0.
\end{split}
\end{equation}
Adding \eqref{eq:mom_eq_mult2tau} and \eqref{eq:dens_modified} and performing integration by parts on the pressure term yields
\begin{equation} \label{eq:mom_eq_trick}
\begin{split}
\| \sigma^{n+1} \bu^{n+1} \|^2 - \| \sigma^n \bu^{n} \|^2 + \| \sigma^n \l( \bu^{n+1} - \bu^n \r) \|^2 
+ 2 \mu \tau \| \nabla \bu^{n+1} \|^2 - 2 \tau \l( p^{n+1},  \nabla \cdot \bu^{n+1} \r) = 0.
\end{split}
\end{equation}

To deal with the pressure term we first multiply the pressure update \eqref{eq:gs_press} with $2 \tau p^{n+1}$, integrate over the domain and apply the identity $2a (a- b) = a^2 - b^2 + (a-b)^2$. Performing these steps gives
\begin{equation} \label{eq:press_eq_trick}
\begin{split}
\tau \l(  \| p^{n+1} \|^2 - \| p^n \|^2 + \| p^{n+1} - p^n \|^2 \r)  + 2 \tau \l( p^{n+1} , \lambda \nabla \cdot \bu^{n+1} \r) = 0.
\end{split}
\end{equation}

Finally, adding \eqref{eq:mom_eq_trick} and \eqref{eq:press_eq_trick} gives our desired $L_2$-estimate

\begin{equation*} 
\begin{split}
0 = &\| \sigma^{n+1} \bu^{n+1} \|^2 - \| \sigma^n \bu^{n} \|^2 + \| \sigma^n \l( \bu^{n+1} - \bu^n \r) \|^2 + 2 \mu \tau \| \nabla \bu^{n+1} \|^2 \\ & + \tau \l(  \| p^{n+1} \|^2 - \| p^n \|^2 + \| p^{n+1} - p^n \|^2 \r) - 2 \tau \l( p^{n+1},  (1 - \lambda) \nabla \cdot \bu^{n+1} \r) \\ \\
= & \| \sigma^{n+1} \bu^{n+1} \|^2 - \| \sigma^n \bu^{n} \|^2 + \| \sigma^n \l( \bu^{n+1} - \bu^n \r) \|^2 + 2 \mu \tau \| \nabla \bu^{n+1} \|^2 \\ & + \tau \l(  \| p^{n+1} \|^2 - \| p^n \|^2 + \| p^{n+1} - p^n \|^2 \r)  - 2 \tau \l( p^{n+1},  -(1 - \lambda) \frac{1}{\lambda} (p^{n+1} - p^n) \r)  \\ \\
= & \| \sigma^{n+1} \bu^{n+1} \|^2 - \| \sigma^n \bu^{n} \|^2 + \| \sigma^n \l( \bu^{n+1} - \bu^n \r) \|^2 + 2 \mu \tau \| \nabla \bu^{n+1} \|^2 \\ & + \tau \l(  \| p^{n+1} \|^2 - \| p^n \|^2 + \| p^{n+1} - p^n \|^2 \r)  +  \tau  \frac{1 - \lambda}{\lambda} \l(  \|p^{n+1}\|^2  - \| p^n\|^2 + \|p^{n+1} - p^n\|^2 \r)   \\ \\
= & \| \sigma^{n+1} \bu^{n+1} \|^2 - \| \sigma^n \bu^{n} \|^2 + \| \sigma^n \l( \bu^{n+1} - \bu^n \r) \|^2 + 2 \mu \tau \| \nabla \bu^{n+1} \|^2 \\ & +  \frac{\tau }{\lambda} \l(  \| p^{n+1} \|^2 - \| p^n \|^2 + \| p^{n+1} - p^n \|^2 \r) .
\end{split}
\end{equation*}
\phantom{invisible text}
\end{proof}

\newpage
\bibliographystyle{abbrvnat} 
\bibliography{ref}
\end{document}

\bibliographystyle{siam}
\bibliography{ref}
\end{document}